\newcommand*{\QEDA}{\hfill\ensuremath{\blacksquare}}
\newtheorem{thm}{Theorem}[section]
\newtheorem{lem}[thm]{Lemma}
\newtheorem{cor}[thm]{Corollary}
\newtheorem{obs}[thm]{Observation}
\theoremstyle{definition}
\newtheorem{defn}[thm]{Definition}
\newtheorem*{exa}{Example}
\title{The principal Erd\H{o}s--Gallai differences of a degree sequence}
\author{Michael D. Barrus\\Department of Mathematics and Applied Mathematical Sciences\\University of Rhode Island, USA}
\begin{document}
\maketitle

\begin{abstract}
	The Erd\H{o}s--Gallai criteria for recognizing degree sequences of simple graphs involve a system of inequalities. Given a fixed degree sequence, we consider the list of differences of the two sides of these inequalities. These differences have appeared in varying contexts, including characterizations of the split and threshold graphs, and we survey their uses here. Then, enlarging upon properties of these graph families, we show that both the last term and the maximum term of the principal Erd\H{o}s--Gallai differences of a degree sequence are preserved under graph complementation and are monotonic under the majorization order and Rao's order on degree sequences.
\end{abstract}

\section{Erd\H{o}s--Gallai differences} \label{sec: intro}

	Let $d=(d_1,\dots,d_n)$ be the degree sequence of an arbitrary finite, simple graph, and suppose that the terms of $d$ are listed in nonincreasing order. Let $m(d) = \max\{i: d_i \geq i-1\}$; this parameter is called the \emph{modified Durfee number} of $d$. We define the \emph{$k$th Erd\H{o}s--Gallai difference} $\Delta_k(d)$ by \[\Delta_k(d) = k(k-1) + \sum_{i > k} \min\{k,d_i\} - \sum_{i \leq k } d_i,\] for nonnegative integers $k$.
	
	Though we allow $k=0$ (so $\Delta_0(d)=0$) for convenience, we will primarily be interested in the terms of the list \[\Delta(d) = (\Delta_1(d),\dots,\Delta_{m(d)}(d)).\] We call $\Delta(d)$ the list of \emph{principal} Erd\H{o}s--Gallai differences.
	
	The Erd\H{o}s--Gallai differences have appeared implicitly or explicitly in the work of several authors (see, for example, the work of Li~\cite{Li75}, in which the differences appear with the opposite sign). Functioning almost like a ``spectrum'' of a degree sequence, they provide some intriguing pieces of information about graphs, but as yet they seem to have attracted limited notice. In this paper we begin by surveying known properties of $\Delta(d)$. In later sections we prove that $\Delta(d)$ behaves ``nicely'' under the operation of graph complementation (as applied to degree sequences) and under the majorization relation and another degree-sequence-based relation introduced by Rao~\cite{Rao80}.
	
	We organize our survey by the contexts in which the terms of $\Delta(d)$ have previously appeared, briefly summarizing each.
	
	\bigskip
	\noindent
	\textbf{Origins and nonnegativity.} \quad The Er\H{o}s--Gallai differences take their name from a classic results of Erd\H{o}s and Gallai that characterizes the degree sequences of simple graphs. 
	
	\begin{thm}[\cite{ErdosGallai60}] \label{thm: EG criterion}
		A list $d=(d_1,\dots,d_n)$ of nonnegative integers, in nonincreasing order, is the degree sequence of a simple graph if and only if its entries have even sum and \begin{equation} \label{eq: EG ineq}
		\sum_{i \leq k} d_i \leq k(k-1) + \sum_{i > k} \min\{k,d_i\}
		\end{equation}
		holds for all $k \in \{1,\dots,n\}$.
	\end{thm}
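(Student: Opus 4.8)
The plan is to establish the two directions separately: necessity is a short double-counting argument, while sufficiency carries essentially all of the difficulty.

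\emph{Necessity.} First I would assume $d$ is the degree sequence of a simple graph $G$ with vertices $v_1,\dots,v_n$ where $\deg(v_i)=d_i$. The parity condition is immediate from the handshaking lemma, since each edge contributes exactly $2$ to $\sum_i d_i$. To obtain inequality~\eqref{eq: EG ineq}, fix $k$ and let $S=\{v_1,\dots,v_k\}$. Splitting each edge incident to $S$ according to whether its other endpoint also lies in $S$ gives
\[
\sum_{i \le k} d_i = 2e(S) + e(S,\overline{S}),
\]
where $e(S)$ counts edges inside $S$ and $e(S,\overline{S})$ counts edges from $S$ to its complement. Then $e(S)\le\binom{k}{2}$ yields $2e(S)\le k(k-1)$, and since each vertex $v_i$ with $i>k$ sends at most $\min\{k,d_i\}$ of its $d_i$ edges into the $k$-element set $S$, we get $e(S,\overline{S})\le\sum_{i>k}\min\{k,d_i\}$. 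Adding the two bounds gives exactly~\eqref{eq: EG ineq}.

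\emph{Sufficiency.} This is the substantive direction, and I would argue by induction on $n$. The degenerate case $d_1=0$ forces $d$ to be all zeros and is handled by the edgeless graph. Otherwise I would use the Havel--Hakimi ``laying off'' operation: delete $d_1$ and subtract $1$ from each of the next $d_1$ largest terms $d_2,\dots,d_{d_1+1}$, then re-sort to obtain a sequence $d'$ on $n-1$ vertices (the case $k=1$ of~\eqref{eq: EG ineq} guarantees $d_1\le n-1$, so there are enough terms to decrement). The easy half of the Havel--Hakimi reduction---that a realization of $d'$ extends to one of $d$ by reintroducing a vertex joined to the decremented ones---requires no hypotheses. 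So it suffices to show that $d'$ again has even sum (clear, since the sum drops by $2d_1$) and again satisfies all the Erd\H{o}s--Gallai inequalities; the inductive hypothesis then makes $d'$ graphic, and hence $d$ graphic.

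The main obstacle is precisely this preservation step: verifying that~\eqref{eq: EG ineq} survives the laying-off operation for every index $k$. The difficulty is that decrementing $d_2,\dots,d_{d_1+1}$ and re-sorting can shuffle the positions of equal entries, so the two sides of~\eqref{eq: EG ineq} for $d'$ do not correspond termwise to those for $d$. I expect to handle this by a case analysis on the relationship between $k$ and $d_1$, bounding how much each side can move under the decrement: the left side loses the deleted largest term, while the right side is controlled using the fact that the $\min\{k,\cdot\}$ truncations absorb most of the decrements. A cleaner alternative I would keep in reserve is the direct constructive route (in the style of Tripathi--Venugopalan--West), building a realization vertex by vertex and using~\eqref{eq: EG ineq} to show the greedy assignment of edges never stalls; there the analogous obstacle is proving that enough eligible neighbors always remain, which again reduces to the Erd\H{o}s--Gallai bound at the critical index.
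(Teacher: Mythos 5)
The paper offers no proof of this statement; it is quoted as a classical result with a citation to Erd\H{o}s and Gallai, so there is no in-paper argument to compare against. Judged on its own terms, your proposal is half a proof. The necessity direction is complete and correct: the identity $\sum_{i\le k} d_i = 2e(S) + e(S,\overline{S})$ together with $e(S)\le\binom{k}{2}$ and the bound $e(S,\overline{S})\le\sum_{i>k}\min\{k,d_i\}$ is exactly the standard double-counting argument, and the parity claim follows from handshaking.

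The sufficiency direction, however, contains a genuine gap, and you have named it yourself: the entire content of that direction is the claim that the inequalities~\eqref{eq: EG ineq} are preserved when $d$ is replaced by the Havel--Hakimi residual sequence $d'$, and you do not prove it. Saying ``I expect to handle this by a case analysis on the relationship between $k$ and $d_1$'' is a plan, not an argument; this preservation step is precisely where all the difficulty of the theorem lives, and it is not routine --- the re-sorting after decrementing $d_2,\dots,d_{d_1+1}$ means the index achieving a potential violation for $d'$ need not correspond to any single index for $d$, and one must track both how the left side shifts (losing $d_1$ and up to $\min\{k, d_1\}$ units of decrement among the first $k$ surviving terms) and how the truncated sums $\sum\min\{k,\cdot\}$ on the right respond, which splits into several cases according to whether $k\le d_1$ and whether the decremented entries sit above or below the truncation level $k$. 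Until that analysis is written out (or replaced by one of the standard complete arguments, such as Choudum's induction on $\sum d_i$ or the Tripathi--Venugopalan--West subrealization construction you mention as a fallback), the proof of the ``if'' direction is not established. The fallback itself is likewise only gestured at: ``using~\eqref{eq: EG ineq} to show the greedy assignment never stalls'' is again the whole theorem in one sentence.
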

	
	Our definition of $\Delta_k(d)$ is simply the difference between the two sides in the inequality~\eqref{eq: EG ineq} with parameter value $k$. Consequently, a sorted list $d$ of $n$ nonnegative integers with even sum is a degree sequence if and only if $\Delta_k(d) \geq 0$ for all $n$.	
	
	\bigskip
	\noindent
	\textbf{Relations to each other.} \quad Authors have improved upon Theorem~\ref{thm: EG criterion} in various ways. One fruitful way weakens the conditions by checking only certain of the inequalities.
	
	\begin{thm}[Li \cite{Li75}; Hammer--Ibaraki--Simeone \cite{HammerIbarakiSimeone81}] \label{thm: Li criterion}
		A sequence $(d_1,\dots,d_n)$ of nonnegative numbers with even sum, arranged in nonincreasing order, is a degree sequence if and only if the $k$th Erd\H{o}s--Gallai inequality holds for all values of $k$ such that $1 \leq k \leq m(d)$, where $m(d) = \max\{i: d_i \geq i-1\}$.
	\end{thm}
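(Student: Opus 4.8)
The plan is to prove the two directions separately, with the forward implication immediate and the reverse reducing to a monotonicity statement about the Erd\H{o}s--Gallai differences. The ``only if'' direction is trivial: if $d$ is a degree sequence, then by Theorem~\ref{thm: EG criterion} every inequality~\eqref{eq: EG ineq} holds, in particular those with $1 \leq k \leq m(d)$. For the ``if'' direction, suppose $\Delta_k(d) \geq 0$ for all $k$ with $1 \leq k \leq m(d)$. Since the even-sum hypothesis is assumed, Theorem~\ref{thm: EG criterion} tells us it suffices to show that $\Delta_k(d) \geq 0$ also holds for $m(d) < k \leq n$. I would establish this by proving that the truncated list $\Delta_{m(d)}(d), \Delta_{m(d)+1}(d), \dots, \Delta_n(d)$ is nondecreasing; as its initial term is nonnegative by hypothesis, every later term is then nonnegative as well, and Theorem~\ref{thm: EG criterion} applies.

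The central computation is the consecutive difference $\Delta_{k+1}(d) - \Delta_k(d)$. Expanding the definition and carefully tracking the index shift---separating the $i=k+1$ term out of $\sum_{i>k}\min\{k,d_i\}$ and out of $\sum_{i\leq k+1} d_i$---I would obtain
\[
\Delta_{k+1}(d) - \Delta_k(d) = 2k + \sum_{i>k+1}\bigl(\min\{k+1,d_i\} - \min\{k,d_i\}\bigr) - \min\{k,d_{k+1}\} - d_{k+1}.
\]
Each summand $\min\{k+1,d_i\} - \min\{k,d_i\}$ is either $0$ or $1$, so that sum is nonnegative. The definition of $m(d)$ is precisely what governs the sign once $k \geq m(d)$: because then $k+1 > m(d)$, we have $d_{k+1} \leq (k+1)-2 = k-1$, whence $\min\{k,d_{k+1}\} = d_{k+1}$. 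Substituting gives $\Delta_{k+1}(d) - \Delta_k(d) \geq 2k - 2d_{k+1} \geq 2 > 0$ for every $k$ with $m(d) \leq k < n$, establishing the required (in fact strict) monotonicity.

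The main obstacle I anticipate is simply the bookkeeping in deriving the difference identity: correctly accounting for how each $\min\{k,d_i\}$ term changes as $k$ increases by one, and how the boundary index $k+1$ migrates among the three pieces of the formula. Once that identity is secured, the role of $m(d)$---forcing $d_{k+1} \leq k-1$ and hence $\min\{k,d_{k+1}\} = d_{k+1}$---makes the sign analysis, and thus the theorem, fall out immediately.
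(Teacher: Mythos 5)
Your proof is correct. The paper does not prove this theorem itself---it is cited from Li and Hammer--Ibaraki--Simeone---but your argument is precisely the route the paper indicates, namely reproving Lemma~\ref{lem: Li properties}(i) (that $\Delta_{m(d)}(d),\dots,\Delta_n(d)$ is strictly increasing) via the consecutive-difference identity; the identity, the observation that each $\min\{k+1,d_i\}-\min\{k,d_i\}\in\{0,1\}$, and the use of $d_{k+1}\leq k-1$ for $k\geq m(d)$ all check out.
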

	
	Note here the appearance of the modified Durfee number $m(d)$. It is an upper bound on various graph parameters, such as the clique number. Theorem~\ref{thm: Li criterion}'s strengthening of Theorem~\ref{thm: EG criterion} suggests why the terms $\Delta_k(d)$ for $1 \leq k \leq m(d)$ should be the ``principal'' Erd\H{o}s--Gallai differences. When we consider complementary sequences in Section~\ref{sec: complements}, we will see another reason.
	
	The way that Theorem~\ref{thm: Li criterion} and similar criteria (as seen in \cite{BarrusEtAl12, Eggleton75, TripathiVijay03, Zverovich92}, for example) are proved involves showing that certain values of $\Delta(d)$ are bounded by others, so the nonnegativity of $\Delta_k(d)$ need only be checked for certain $k$. Though we do not review all such statements here, we illustrate this idea with the following results of Li.
	\begin{lem}[Li~\cite{Li75}] \label{lem: Li properties}
		Let $d$ be a degree sequence $d=(d_1,\dots,d_n)$ in nonincreasing order, and let $m=m(d)$. 
		\begin{enumerate}
			\item[\textup{(i)}] The differences $\Delta_{m},\Delta_{m+1},\dots,\Delta_n(d)$ form a strictly increasing sequence.
			\item[\textup{(ii)}] If $p$ denotes $\max\{i: d_i \geq m\}$, then the differences $\Delta_{p}(d), \Delta_{p+1}(d),\dots,\Delta_{m}(d)$ are all equal.
			\item[\textup{(iii)}] If $\Delta_q(d)$ is nonnegative, where $q$ is the last index in which the maximum value from $d$ appears, then $\Delta_i(d) \geq 0$ for all $i \in \{1,\dots,\min\{q,m\}\}$.			
		\end{enumerate}
	\end{lem}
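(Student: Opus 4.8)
The engine for all three parts is a formula for the consecutive difference $\Delta_{k+1}(d) - \Delta_k(d)$. The plan is to expand each of the three pieces of $\Delta_k(d)$ under $k \mapsto k+1$: the quadratic term contributes $2k$, the prefix sum $\sum_{i\le k} d_i$ grows by $d_{k+1}$, and in the tail sum each surviving term $\min\{k,d_i\}$ increases by $1$ exactly when $d_i \ge k+1$ while the dropped term contributes $-\min\{k,d_{k+1}\}$. Collecting these gives
$$\Delta_{k+1}(d) - \Delta_k(d) = 2k + |\{i > k+1 : d_i \ge k+1\}| - \min\{k,d_{k+1}\} - d_{k+1}.$$
The whole argument then turns on a single dichotomy controlled by $m=m(d)$: for $k \ge m$ the index $k+1$ exceeds $m$, so $d_{k+1} \le (k+1)-2 = k-1$, whereas for $k+1 \le m$ one has $d_{k+1} \ge (k+1)-1 = k$.

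For part (i) I would substitute the first case: when $k \ge m$ we have $\min\{k,d_{k+1}\} = d_{k+1}$, so the difference collapses to $2(k - d_{k+1}) + |\{i>k+1 : d_i \ge k+1\}|$, which is at least $2(k-d_{k+1}) \ge 2 > 0$ since $d_{k+1} \le k-1$. This proves strict monotonicity on $\{m,\dots,n\}$ at once.

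For part (ii) I would substitute the second case: for $p \le k \le m-1$ we have $k+1 \le m$, hence $\min\{k,d_{k+1}\} = k$, and the difference simplifies to $k - d_{k+1} + |\{i > k+1 : d_i \ge k+1\}|$. It therefore suffices to prove the count identity $|\{i > k+1 : d_i \ge k+1\}| = d_{k+1} - k$ throughout this range. The plan is to read this off from the structure forced between indices $p$ and $m$: since $k+1 > p = \max\{i : d_i \ge m\}$ we have $d_{k+1} \le m-1$, and since $k+1 \le m$ we have $d_{k+1} \ge k$; translating the count into the conjugate degree sequence $d^*$ and using that $d_i \ge i-1$ for all $i \le m$, one checks that the vertices of degree at least $k+1$ occupy exactly the predicted initial segment. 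I expect this count cancellation---pinning down precisely where the equal run begins, i.e. the correct left endpoint---to be the main obstacle, since it is sensitive to ties among the degrees and is where the hypotheses on $p$ and $m$ must be used most carefully.

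For part (iii) I would pass to the second difference on the initial block of maximum-degree vertices. For $1 \le k \le \min\{q,m\}-1$ both $d_k$ and $d_{k+1}$ equal the maximum degree $D$ (as $k+1 \le q$) and both lie in the regime of part (ii) (as $k+1 \le m$); writing $c_j = |\{i > j+1 : d_i \ge j+1\}|$ and subtracting two instances of the first-difference formula, the identity $c_{k-1} - c_k = 1 + |\{i \ge k+2 : d_i = k\}| \ge 1$ yields $\Delta_{k+1}(d) - 2\Delta_k(d) + \Delta_{k-1}(d) \le 0$. Thus $\Delta$ is concave on $\{0,1,\dots,\min\{q,m\}\}$, so its minimum there is attained at an endpoint. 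Since $\Delta_0(d) = 0$, it remains to check the right endpoint: if $q \le m$ it is $\Delta_q(d) \ge 0$ by hypothesis, while if $q > m$ then $D = d_{m+1} \le m-1$ forces $\sum_{i\le m} d_i = mD \le m(m-1)$ and hence $\Delta_m(d) \ge 0$ directly. In either case both endpoints are nonnegative, so $\Delta_i(d) \ge 0$ for all $i \le \min\{q,m\}$, as claimed.
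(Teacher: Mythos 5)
The paper does not actually prove this lemma---it is quoted from Li~\cite{Li75} without proof---so there is nothing internal to compare against; I am judging your argument on its own. Your first-difference formula
\[
\Delta_{k+1}(d)-\Delta_k(d)=2k+\bigl|\{i>k+1: d_i\ge k+1\}\bigr|-\min\{k,d_{k+1}\}-d_{k+1}
\]
is correct, and it carries parts (i) and (iii): for (i), $k\ge m$ forces $d_{k+1}\le k-1$, so the difference is at least $2$; your concavity argument for (iii) also checks out (the second difference on the initial block of maximum-degree indices equals $-|\{i\ge k+2: d_i=k\}|\le 0$, and both endpoints $\Delta_0=0$ and $\Delta_{\min\{q,m\}}$ are nonnegative, the latter either by hypothesis or, when $q>m$, because $d_1=\dots=d_{m+1}\le m-1$ gives $\sum_{i\le m}d_i\le m(m-1)$).

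Part (ii) is where your proof breaks, and the gap is not repairable as planned: the count identity $|\{i>k+1:d_i\ge k+1\}|=d_{k+1}-k$ that you defer to a conjugate-sequence computation is simply false in the range $p\le k\le m-1$ with $p=\max\{i:d_i\ge m\}$. The paper's own running example $d=(6,5,3,3,3,1,1,1,1)$ is a witness: here $m(d)=4$, $p=\max\{i:d_i\ge 4\}=2$, and $\Delta(d)=(2,1,2,2)$, so $\Delta_2\ne\Delta_3$; at $k=2$ your identity would require $|\{i>3:d_i\ge 3\}|=d_3-2=1$, whereas the actual count is $2$. So the statement of (ii) as transcribed in this paper is itself contradicted by the paper's example, and no proof of it can succeed. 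What your formula \emph{does} prove painlessly is the version with $p$ replaced by the ordinary Durfee number $D=\max\{i:d_i\ge i\}$: for $D\le k\le m-1$ one has $d_{k+1}\le k$ from $k\ge D$ and $d_{k+1}\ge k$ from $k+1\le m$, hence $d_{k+1}=k$, which makes both sides of your identity zero. Since $D\le m\le D+1$, that corrected statement reduces to Lemma~\ref{lem: mth term equals m-1}. You correctly identified (ii) as the delicate point, but the right move there is to produce the counterexample and flag the misstatement, not to try to force the identity.
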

	
	\bigskip
	\noindent
	\textbf{Splittance and the last term of $\Delta(d)$.} \quad A split graph is a graph whose vertex set can be partitioned into a clique and an independent set. In~\cite{HammerSimeone81}, Hammer and Simeone defined the \emph{splittance} of a graph $G$ as the minimum number of edges that can be deleted or added to $G$ in order to change it into a split graph. For example, the graph on the left in Figure~\ref{fig: splittance threshold} has splittance 2; if an edge is supplied in place of the dotted nonadjacency, and if the thick edge is removed from the graph, then the resulting graph is split with the black vertices forming a clique and the white vertices forming an independent set.
	
	\begin{figure}
		\centering
		\includegraphics[height=3cm]{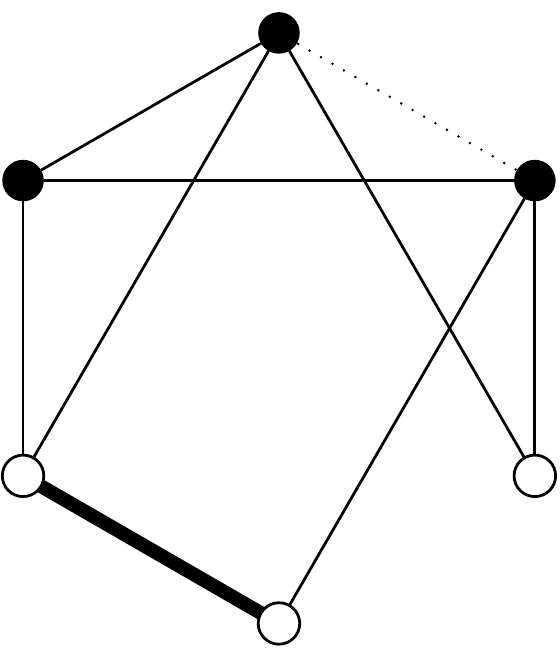} \qquad \qquad \includegraphics[height=3cm]{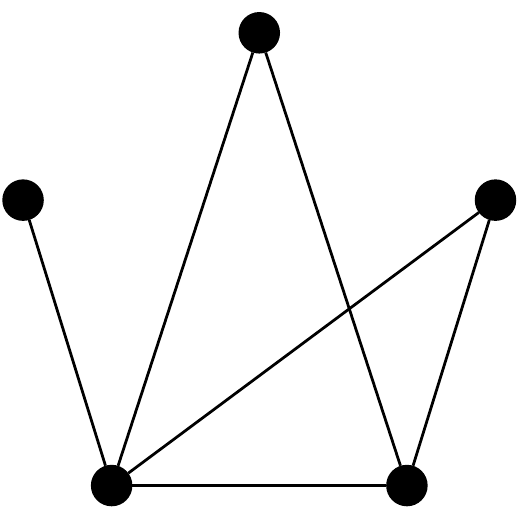}
		\caption{A graph with splittance 2 (left) and a threshold graph (right).}
		\label{fig: splittance threshold}
	\end{figure}
	
	Hammer and Simeone showed that splittance is invariant among all realizations of a degree sequence, and if the degree sequence is $d=(d_1,\dots,d_n)$, with terms written in nonincreasing order, then the splittance is exactly \[\frac{1}{2}\left(m(m-1) + \sum_{i=m+1}^n d_i - \sum_{i=1}^m d_i,\right)\] where $m = m(d)$. Split graphs are precisely the graphs with splittance 0. We restate this result in terms of $\Delta(d)$.
	
	\begin{cor} \label{cor: split iff last EG diff is 0}
		If $d$ is an arbitrary degree sequence $(d_1,\dots,d_n)$, and $G$ is an arbitrary realization of $d$, then $\Delta_{m(d)}(d) = 2s(G)$, where $s(G)$ denotes the splittance of $G$.
		
		Consequently, a graph $G$ having degree sequence $d$ is split if and only if the last term of $\Delta(d)$ equals 0.
	\end{cor}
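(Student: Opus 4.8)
The plan is to derive the identity $\Delta_{m(d)}(d) = 2s(G)$ by placing the definition of $\Delta_{m(d)}(d)$ side by side with the Hammer--Simeone splittance formula quoted just above and showing that the two expressions agree term by term. Writing $m = m(d)$ and specializing the definition of $\Delta_k(d)$ to $k = m$ gives
\[
\Delta_m(d) = m(m-1) + \sum_{i>m}\min\{m,d_i\} - \sum_{i \le m} d_i,
\]
while twice the splittance is
\[
2s(G) = m(m-1) + \sum_{i=m+1}^n d_i - \sum_{i=1}^m d_i.
\]
The first and third summands in each line already coincide, so the whole claim reduces to the single identity $\sum_{i>m}\min\{m,d_i\} = \sum_{i=m+1}^n d_i$.

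The crux, then, is to show that truncation by $m$ has no effect on the tail of the sequence, i.e. that $\min\{m,d_i\} = d_i$ for every $i > m$. This is exactly where the definition of the modified Durfee number enters. Since $m = \max\{i : d_i \ge i-1\}$, the index $m+1$ fails the defining condition, so $d_{m+1} < (m+1)-1 = m$, whence $d_{m+1} \le m-1$. Because $d$ is listed in nonincreasing order, $d_i \le d_{m+1} \le m-1 < m$ for all $i > m$, and therefore $\min\{m,d_i\} = d_i$ throughout the tail. I expect this term-clamping step to be the only genuine content of the argument; once it is in hand the desired identity is immediate, and hence $\Delta_{m(d)}(d) = 2s(G)$.

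For the final sentence of the corollary I would simply read off the two characterizations. The last term of the principal list $\Delta(d) = (\Delta_1(d),\dots,\Delta_{m(d)}(d))$ is by definition $\Delta_{m(d)}(d)$, which we have just shown equals $2s(G)$. Since Hammer and Simeone characterize split graphs as precisely those of splittance $0$, a realization $G$ of $d$ is split if and only if $s(G) = 0$, that is, if and only if $\Delta_{m(d)}(d) = 2s(G) = 0$, which is to say the last term of $\Delta(d)$ vanishes.
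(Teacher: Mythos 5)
Your proof is correct and follows essentially the same route as the paper's, which likewise reduces the identity to the observation that $\min\{m,d_i\}=d_i$ for all $i>m$; you simply spell out the justification (via $d_{m+1}<m$ from the definition of the modified Durfee number) that the paper leaves implicit.
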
	
	\begin{proof}
		Recall that $\Delta(d)$ has length $m(d)$. Note that though the expression defining splittance varies slightly in form from our definition of $\Delta_{m(d)}(d)$, it is true that $\min\{m,d_i\} = d_i$ for all $i>m$, so $\Delta_m(d)$ is twice the splittance of $G$. 
	\end{proof}	
	
	\bigskip
	\noindent
	\textbf{Graph families with degree sequence characterizations.} \quad The result of Hammer and Simeone just described shows that whether a graph is split depends only on its degree sequence. Another family of graphs where membership is determined by the degree sequence is that of the threshold graphs. As defined by Chv\'atal and Hammer in~\cite{ChvatalHammer77}, a threshold graph is a graph whose characteristic vectors for independent sets can be separated by a hyperplane from those of non-independent sets. Equivalently, a graph $G$ is threshold if and only if there exists a weighting of its vertices and a real threshold such that two vertices in $G$ are adjacent exactly when the sums of their weights meets or exceeds the threshold. For example, the graph on the right in Figure~\ref{fig: splittance threshold} is a threshold graph, since its adjacencies satisfy the definition if each vertex is given weight equal to its degree and the threshold 4.5 is used.

	Threshold graphs have many additional characterizations, in such settings as nested neighborhood conditions, forbidden subgraphs, and the majorization order on degree sequences; see \cite{MahadevPeled95} for a book-length survey including all of these, and see~\cite{HammerKelmans96} for a characterization in terms of Laplacian eigenvalues. The degree sequence characterization was found by Hammer, Ibaraki, and Simeone and is implicit in earlier work by Li; it may be restated in terms of $\Delta(d)$ as the following.
	
	\begin{thm}[\cite{HammerIbarakiSimeone78, HammerIbarakiSimeone81}; see also~{\cite[Theorem 18]{Li75}}] \label{thm: threshold}
		A graph $G$ having degree sequence $d$ is threshold if and only if $\Delta_k(d)=0$ for all $k \in \{1,\dots,m(d)\}$. In other words, $G$ is threshold if and only if every term of $\Delta(d)$ is $0$.
	\end{thm}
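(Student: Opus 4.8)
The plan is to translate the arithmetic equalities $\Delta_k(d)=0$ into a combinatorial statement about any realization $G$, and then to recognize that statement as the nested-neighborhood characterization of threshold graphs. I would fix a realization $G$ with vertices $v_1,\dots,v_n$ labeled so that $d_i=\deg(v_i)$ is nonincreasing, and write $S_k=\{v_1,\dots,v_k\}$. The first step is to record the identity $\sum_{i\le k}d_i = 2e(S_k)+e(S_k,\overline{S_k})$, where $e(S_k)$ counts edges inside $S_k$ and $e(S_k,\overline{S_k})$ counts edges leaving it. Substituting this into the definition of $\Delta_k(d)$ splits it as
\[
\Delta_k(d)=\bigl(k(k-1)-2e(S_k)\bigr)+\Bigl(\sum_{i>k}\min\{k,d_i\}-e(S_k,\overline{S_k})\Bigr),
\]
a sum of two nonnegative integers (the first because $e(S_k)\le\binom{k}{2}$, the second because each $v_i$ with $i>k$ sends at most $\min\{k,d_i\}$ edges into $S_k$). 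Hence $\Delta_k(d)=0$ holds exactly when (i) $S_k$ is a clique and (ii) every $v_i$ with $i>k$ attains the maximum possible number $\min\{k,d_i\}$ of neighbors in $S_k$.

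For the reverse direction, suppose $\Delta_k(d)=0$ for all $k\le m:=m(d)$. Applying (i)--(ii) at $k=m$ recovers the split structure already used in Corollary~\ref{cor: split iff last EG diff is 0}: $S_m$ is a clique, and since $d_i\le d_{m+1}\le m-1$ for every $i>m$, condition (ii) forces each such $v_i$ to have all of its neighbors inside $S_m$, so $\overline{S_m}$ is independent. I would then pin down each neighborhood exactly: for $i>m$ the value $k=d_i$ lies in $\{0,\dots,m-1\}$, and condition (ii) at this $k$ says $v_i$ is adjacent to all $d_i$ vertices of $S_{d_i}$; as $v_i$ has only $d_i$ neighbors in total, this yields $N(v_i)=\{v_1,\dots,v_{d_i}\}$. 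Thus the neighborhoods of the independent vertices are nested initial segments of the clique, which is exactly the nested-neighborhood (total vicinal preorder) characterization of threshold graphs; I would invoke that characterization to conclude $G$ is threshold.

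For the forward direction, suppose $G$ is threshold. Threshold graphs are split, so $\Delta_m(d)=0$ by Corollary~\ref{cor: split iff last EG diff is 0}, and the analysis of the $k=m$ case again identifies the clique $S_m$ and the independent set $\overline{S_m}$. Reading the nested neighborhoods of a threshold graph off in nonincreasing degree order again gives $N(v_i)=\{v_1,\dots,v_{d_i}\}$ for $i>m$, while $v_i\in S_m$ is adjacent to all of $S_k$ whenever $k<i$. It then remains to verify condition (ii) at each $k\le m$ by a short case count of $|N(v_i)\cap S_k|$: an independent vertex contributes $|\{v_1,\dots,v_{d_i}\}\cap S_k|=\min\{k,d_i\}$, and a clique vertex $v_i$ with $i>k$ satisfies $d_i\ge d_m\ge m-1\ge k$, so it contributes $k=\min\{k,d_i\}$. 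Since $S_k\subseteq S_m$ is automatically a clique, condition (i) holds too, and therefore $\Delta_k(d)=0$ for every $k\le m$.

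The main obstacle is the bookkeeping that links the purely numerical equalities to the exact neighborhood $N(v_i)=\{v_1,\dots,v_{d_i}\}$, and in particular making sure the split partition respects the chosen degree ordering, so that the top $m$ vertices genuinely form the clique; both points fall out of the $k=m$ instance of (i)--(ii). A minor care-point is vertices of equal degree: fixing any one nonincreasing labeling is harmless, since the counts $\min\{k,d_i\}$ appearing in (ii) do not depend on how ties are broken. The only external input is a standard characterization of threshold graphs by nested neighborhoods, which I would cite from the survey~\cite{MahadevPeled95}.
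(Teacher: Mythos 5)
The paper does not prove this theorem: it is stated as a known result and attributed to Hammer--Ibaraki--Simeone and Li, so there is no in-paper argument to compare against. Your proof is essentially correct and is the natural one. The key step --- writing $\sum_{i\le k}d_i = 2e(S_k)+e(S_k,\overline{S_k})$ so that $\Delta_k(d)$ splits into the two nonnegative deficiencies $\bigl(k(k-1)-2e(S_k)\bigr)$ and $\bigl(\sum_{i>k}\min\{k,d_i\}-e(S_k,\overline{S_k})\bigr)$ --- is exactly the mechanism behind Erd\H{o}s--Gallai \emph{equalities} in the cited literature (it is also how Corollary~\ref{cor: split iff last EG diff is 0} and the $Q,R,S$ decomposition results in Section~\ref{sec: intro} are obtained), and it correctly converts $\Delta_k(d)=0$ into ``$S_k$ is a clique and every later vertex saturates its possible neighbors in $S_k$.''

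Two small points deserve tightening. First, in the reverse direction you conclude thresholdness from the fact that the \emph{independent} vertices have initial-segment neighborhoods $N(v_i)=\{v_1,\dots,v_{d_i}\}$; the nested-neighborhood (total vicinal preorder) characterization requires comparability for \emph{all} pairs, so you should either add the one-line checks for clique--clique and clique--independent pairs (both routine, since every clique vertex dominates $S_m\setminus\{v_j\}$ and $v_j$'s outside-neighbors are $\{v_\ell:\ell>m,\ d_\ell\ge j\}$, which are nested in $j$), or instead cite the equivalent characterization that a graph is threshold iff it is split and the neighborhoods of the independent-set vertices form a chain under inclusion. Second, in the forward direction the assertion that a threshold graph labeled in nonincreasing degree order satisfies $N(v_i)=\{v_1,\dots,v_{d_i}\}$ for $i>m$ is true but is doing real work; it follows from the total vicinal preorder being consistent with the degree order (with ties handled as you note), and a sentence making that deduction explicit, or a precise citation to~\cite{MahadevPeled95}, would close the argument. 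Neither issue is a genuine gap in the approach.
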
	
	
	\bigskip
	\noindent
	\textbf{Forced graph structure corresponding to small values in $\Delta(d)$.} \quad Besides the split and threshold graphs, other classes with degree sequence characterizations include the pseudosplit graphs~\cite{MaffrayPreissmann94}, the matroidal and matrogenic graphs~\cite{MarchioroEtAl84,Tyshkevich84}, the hereditary unigraphs~\cite{Barrus13}, and the unigraphs~\cite{KleitmanLi75,Koren76,Li75}. For each of these cases, the characterization relies in part on a (perhaps iterated) decomposition of the vertex set $V(G)$ of a graph $G$ into sets $Q,R,S$ such that the subgraph induced on $Q\cup S$ is a split graph with clique $Q$ and independent set $S$, and each vertex in $R$ is adjacent to every vertex in $Q$ and to no vertex in $S$. This structure is illustrated on the left in Figure~\ref{fig: decomposition, forced} by the solid and dotted lines and curves; note that there is no restriction on edges between $Q$ and $S$. This decomposition, repeatedly carried out until all induced subgraphs involved are indecomposable, yields the \emph{canonical decomposition} of graphs, as defined and extensively studied by Tyshkevich and others (see~\cite{Tyshkevich00} and the references therein).
	
	\begin{figure}
		\centering
		\includegraphics[height=3cm]{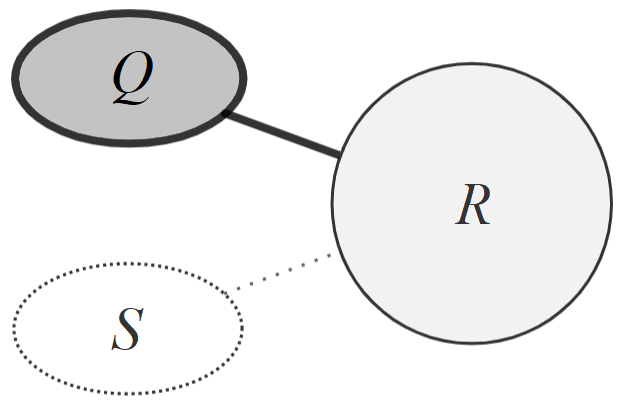} \qquad \qquad \includegraphics[height=2.5cm]{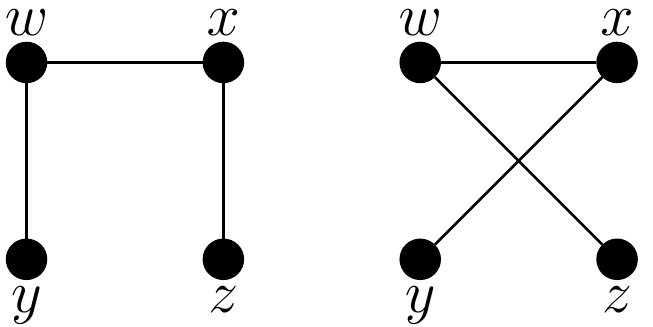}
		\caption{Decomposition corresponding to $\Delta_k(d)=0$ (left) and the labeled realizations of $(2,2,1,1)$ (right).}
		\label{fig: decomposition, forced}
	\end{figure}
	
	This decomposition yields insight into how the principal Erd\H{o}s--Gallai differences are tied to the structure of a graph. One striking connection (and another motivation for focusing attention on the ``principal'' differences) is the following. Here we use $G[X]$ to denote the induced subgraph of $G$ with a given subset $X$ of the vertex set $V(G)$.
	
	\begin{thm}[\cite{Barrus18WT}]
		If $d$ is the degree sequence of $G$, and if $V(G)$ is partitioned into $Q,R,S$ as previously described, then $\Delta(d)$ equals the list formed by appending $\Delta(G[R])$ to $\Delta(G[Q\cup S])$.
	\end{thm}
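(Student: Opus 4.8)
The plan is to write the degree sequence of $G$ explicitly in terms of the two induced subgraphs, sort it into three contiguous blocks, and then evaluate $\Delta_k(d)$ directly in the ranges $k \le |Q|$ and $k > |Q|$. Put $q=|Q|$, $r=|R|$, $s=|S|$, and let $\sigma$ and $\tau$ denote the nonincreasing degree sequences of $G[Q\cup S]$ and $G[R]$. The defining adjacencies give simple degree formulas: a vertex of $Q$ acquires all of $R$, so its degree in $G$ is its $\sigma$-degree plus $r$; a vertex of $R$ acquires all of $Q$, so its degree in $G$ is its $\tau$-degree plus $q$; and a vertex of $S$ retains its $\sigma$-degree.

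First I would pin down the sorted order. Each $Q$-degree is at least $q-1+r$, each $R$-degree lies between $q$ and $q+r-1$, and each $S$-degree is at most $q-1$ (no vertex of $S$ is adjacent to all of $Q$ in the decomposition as taken, so $m(\sigma)=q$). Hence the nonincreasing arrangement of $d$ lists $Q$, then $R$, then $S$, with position $q+j$ carrying the value $\tau_j+q$, the first $q$ entries equal to $\sigma_1+r,\dots,\sigma_q+r$, and the last $s$ entries equal to the remaining entries $\sigma_{q+1},\dots,\sigma_{q+s}$ of $\sigma$. This block structure also fixes the lengths: the inequality $d_{q+j}\ge (q+j)-1$ is equivalent to $\tau_j\ge j-1$, so $m(d)=q+m(\tau)=m(\sigma)+m(\tau)$, and therefore $\Delta(d)$ and the concatenation of $\Delta(\sigma)$ with $\Delta(\tau)$ have equal length.

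For $1\le k\le q$ I would substitute the block values into the definition of $\Delta_k$. Every vertex outside the top $k$ but inside $Q\cup R$ has degree at least $k$, so it contributes exactly $k$ to $\sum_{i>k}\min\{k,d_i\}$; in particular the $r$ vertices of $R$ contribute $rk$, which cancels the total $kr$ added to the top $k$ degrees. What survives is term-for-term the expression for $\Delta_k(\sigma)$, so $\Delta_k(d)=\Delta_k(\sigma)$ throughout the first block.

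The crux is the second block. Taking $k=q+j$ with $1\le j\le m(\tau)$, the top $k$ vertices are all of $Q$ together with the top $j$ vertices of $R$. Expanding $\Delta_{q+j}(d)$, all terms carrying $\tau$ reassemble exactly into $\Delta_j(\tau)$, while the constants produced by the $Q$--$R$ join collapse: $(q+j)(q+j-1)-j(j-1)$ together with the net join contribution $-2jq$ leaves $q(q-1)$, and the $S$-entries supply $\sum_{t}\sigma_{q+t}-\sum_{i\le q}\sigma_i$. The residue $q(q-1)+\sum_{t}\sigma_{q+t}-\sum_{i\le q}\sigma_i$ is precisely $\Delta_q(\sigma)$, the last principal difference of the split graph $G[Q\cup S]$; by Corollary~\ref{cor: split iff last EG diff is 0} this equals $2s(G[Q\cup S])=0$. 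Hence $\Delta_{q+j}(d)=\Delta_j(\tau)$, and the two lists agree. The main obstacle is this final bookkeeping---tracking the join contributions and recognizing the leftover constant as the vanishing difference $\Delta_q(\sigma)$---preceded by the verification that the three degree blocks do not interleave, which is where the canonical structure of the decomposition (ensuring $m(\sigma)=q$) enters.
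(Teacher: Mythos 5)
Your computation is correct, and since this survey only states the result with a citation to~\cite{Barrus18WT} rather than reproducing a proof, there is no in-paper argument to compare yours against; it has to stand on its own. It does, but the one place where you wave at the hypotheses --- ``no vertex of $S$ is adjacent to all of $Q$ in the decomposition as taken, so $m(\sigma)=q$'' --- is not a convenience but a genuine additional assumption, without which the statement as literally quoted is false. Take $G=P_3$ with vertices $b$--$a$--$c$ and the partition $Q=\{a\}$, $S=\{b\}$, $R=\{c\}$: this satisfies every condition in the paper's description of the decomposition ($G[Q\cup S]$ is split with clique $Q$ and independent set $S$, and $c$ is complete to $Q$ and anticomplete to $S$), yet $\Delta(d)=(0,0)$ has length $2$ while $\Delta(G[Q\cup S])=(0,0)$ and $\Delta(G[R])=(0)$ concatenate to a list of length $3$. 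The failure is exactly that $b$ is complete to $Q$, so $m(\sigma)=2>1=q$. Your hypothesis $m(\sigma)=q$ (equivalently: $Q\neq\emptyset$ and no $S$-vertex is complete to $Q$) is what the canonical decomposition supplies --- in case (ii) of the theorem quoted from~\cite{Barrus13}, $S$ consists of the vertices of $G$-degree less than $|Q|$, so none can be complete to $Q$ --- and you should state it as a standing assumption rather than parenthetically. (The degenerate case (i), with $Q=\emptyset$ and $S$ an isolated vertex, needs separate handling for the same length-counting reason.)

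Granting $m(\sigma)=q$, everything checks out: the three degree blocks do not interleave (ties between $Q$- and $R$-degrees at $q+r-1$ are harmless since $\Delta_k$ depends only on the sorted sequence), $m(d)=q+m(\tau)=m(\sigma)+m(\tau)$ so the lengths agree, the $kr$ cancellation gives $\Delta_k(d)=\Delta_k(\sigma)$ for $k\le q$, and for $k=q+j$ the residue $q(q-1)+\sum_{t}\sigma_{q+t}-\sum_{i\le q}\sigma_i$ is indeed $\Delta_q(\sigma)$, which vanishes by Corollary~\ref{cor: split iff last EG diff is 0} because $G[Q\cup S]$ is split. An alternative route more in the spirit of Section~\ref{sec: M(d)} would observe that under the same hypothesis the first $q+m(\tau)$ rows of $M(d)$ decompose into a copy of the relevant rows of $M(\sigma)$ and of $M(\tau)$ plus rows summing to zero, and invoke Observation~\ref{obs: first rows of M(d)}; but your direct evaluation of the Erd\H{o}s--Gallai sums is complete as it stands.
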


	There are further connections between $\Delta(d)$ and the canonical decomposition.
	
	\begin{thm}[\cite{Barrus13}]
		A graph $G$ with at least two vertices admits a partition of $V(G)$ into sets $Q,R,S$ as previously described, with $Q\cup S$ and $R$ both nonempty, if and only if one of the following holds.
		\begin{enumerate}
			\item[\textup{(i)}] $G$ has one or more isolated vertices; here we may let $S$ consist of one isolated vertex, with $Q=\emptyset$ and $R = V(G) \setminus (Q \cup S)$.
			\item[\textup{(ii)}] $\Delta_k(d)=0$ for some positive integer $k$; here the vertices in $Q$ may be $k$ vertices of highest degree in $G$, the vertices in $S$ are all those having a degree in $G$ strictly less than $k$, and $R = V(G) \setminus (Q \cup S)$.
		\end{enumerate}
	\end{thm}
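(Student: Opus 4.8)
The plan is to reduce everything to the equality case of the Erd\H{o}s--Gallai inequality~\eqref{eq: EG ineq}. For a fixed positive integer $k$ and the given graph $G$, let $Q$ be a set of $k$ vertices of largest degree in $G$. Writing $e(Q)$ for the number of edges inside $Q$ and $N(v)$ for the neighborhood of a vertex $v$, summing degrees over $Q$ gives $\sum_{i \le k} d_i = 2e(Q) + \sum_{v \notin Q} |N(v) \cap Q|$, so that
\begin{equation*}
\Delta_k(d) = \bigl(k(k-1) - 2e(Q)\bigr) + \sum_{v \notin Q}\bigl(\min\{k,d_v\} - |N(v) \cap Q|\bigr).
\end{equation*}
Both grouped quantities are nonnegative, since $e(Q) \le \binom{k}{2}$ and since each $v \notin Q$ has at most $\min\{k,d_v\}$ neighbors in $Q$. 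Hence $\Delta_k(d)=0$ if and only if $Q$ is a clique and every $v \notin Q$ satisfies $|N(v) \cap Q| = \min\{k,d_v\}$; the latter says that a vertex outside $Q$ of degree at least $k$ is adjacent to all of $Q$, while a vertex outside $Q$ of degree less than $k$ has all of its neighbors in $Q$. This equivalence is the engine for both directions, and I expect its verification to be the routine-but-central core of the argument.

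For the backward direction, first suppose (i) holds. Taking $S=\{v\}$ for an isolated vertex $v$, $Q=\emptyset$, and $R=V(G)\setminus\{v\}$ produces the required partition, with $R$ nonempty because $G$ has at least two vertices. Now suppose (ii) holds, and apply the equivalence above with the given $k$. Setting $Q$ to be the chosen top-$k$ vertices, $S=\{v \notin Q : d_v < k\}$, and $R=\{v \notin Q : d_v \ge k\}$, the clique condition makes $Q$ a clique, the degree-less-than-$k$ condition makes $S$ independent with no edges to $R$, and the degree-at-least-$k$ condition makes every vertex of $R$ adjacent to all of $Q$; thus $G[Q \cup S]$ is split with clique $Q$ and independent set $S$, and $R$ has the stated adjacencies. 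This produces the required adjacency structure, and it remains only to confirm that $Q \cap S = \emptyset$ and $R \neq \emptyset$, which I address below.

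For the forward direction, suppose $V(G)$ is partitioned into $Q,R,S$ as described with $Q \cup S$ and $R$ both nonempty. If $Q = \emptyset$, then each vertex of $S$ is adjacent to no vertex of $Q$ (vacuously), none of $R$ (by hypothesis), and none of $S$ (as $S$ is independent), so $S$ consists of isolated vertices and (i) holds. If instead $Q \neq \emptyset$, set $k=|Q|$. Because $R \neq \emptyset$, every vertex of $Q$ has degree at least $(k-1)+|R| \ge k$, while every vertex of $R$ has degree at most $(k-1)+|R|$ and every vertex of $S$ has degree at most $k$; hence $Q$ is a valid choice of top-$k$ vertices, and a direct count (using that $Q$ is a clique, that $R$ is complete to $Q$ and anticomplete to $S$, and that $S$ is anticomplete to everything outside $Q$) shows that both grouped terms in the displayed identity vanish. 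Therefore $\Delta_k(d)=0$, giving (ii).

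The step I expect to require the most care is the nonemptiness and disjointness bookkeeping in the backward direction under (ii): the recipe yields a genuine partition with $R$ nonempty precisely when $d_{k+1} \ge k$ (equivalently, when more than $k$ vertices have degree at least $k$), and one must confirm that this is the regime in which the chosen $k$ supplies a nontrivial decomposition, exactly mirroring the degree inequalities forced in the forward direction by $R \neq \emptyset$. Handling this interface cleanly, rather than the equality analysis itself, is the main obstacle.
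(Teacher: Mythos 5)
The survey states this theorem without proof (it is quoted from~\cite{Barrus13}), so there is no in-paper argument to compare against; I assess your proposal on its own terms. Your central identity
\[
\Delta_k(d) = \bigl(k(k-1) - 2e(Q)\bigr) + \sum_{v \notin Q}\bigl(\min\{k,d_v\} - |N(v) \cap Q|\bigr)
\]
is correct, as is the deduction that $\Delta_k(d)=0$ forces $Q$ to be a clique, every outside vertex of degree at least $k$ to be complete to $Q$, and every outside vertex of degree less than $k$ to have all of its neighbors in $Q$. Your forward direction is also sound: when the partition exists with $Q\neq\emptyset$ and $R\neq\emptyset$, the degree bounds you give show that $Q$ is a legitimate choice of top-$k$ set for $k=|Q|$, both nonnegative terms vanish, and $\Delta_k(d)=0$; the case $Q=\emptyset$ correctly yields an isolated vertex.

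The gap you flag at the end, however, is not a bookkeeping nuisance that careful handling will dissolve: it cannot be closed, because the statement as transcribed here is false. Take $G=P_4$, the very example the paper uses: $d=(2,2,1,1)$, $\Delta(d)=(1,0)$, so condition (ii) holds with $k=2$ and $G$ has no isolated vertex; yet checking each nonempty candidate set $R$ shows that $P_4$ admits no partition $Q,R,S$ of the required type with $Q\cup S$ and $R$ both nonempty ($P_4$ is canonically indecomposable). The trouble is exactly where you located it: your recipe gives $R\neq\emptyset$ if and only if $d_{k+1}\ge k$, which is equivalent to $k<m(d)$, and for $P_4$ the only vanishing difference occurs at $k=m(d)=2$. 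Note that your own forward direction already proves that $R\neq\emptyset$ forces some vertex outside $Q$ to have degree at least $k$, hence $d_{k+1}\ge k$ and $k<m(d)$. So what your argument actually establishes, correctly and completely, is the amended equivalence: the partition exists with both parts nonempty if and only if $G$ has an isolated vertex or $\Delta_k(d)=0$ for some positive integer $k<m(d)$. You should either prove that corrected statement or record that condition (ii) requires the extra hypothesis $d_{k+1}\ge k$; consult~\cite{Barrus13} for the precise original formulation.
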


	Note that if the partition $Q,R,S$ exists for one realization of a degree sequence $d$, then these sets' same prescribed adjacencies and non-adjacencies (i.e., all possible edges within $Q$, none within $S$, and the edges/non-edges required between $R$ and the other two sets) continue to hold in every other realization of $d$ having the same vertex set and vertex degrees. 
	
	These ideas lead to a new problem. Note that when $\Delta_k(d)=0$ for some $k$, the canonical decomposition leads to certain vertices being guaranteed to be adjacent, or guaranteed to be nonadjacent, in every realization of $d$. We see this on the left in Figure~\ref{fig: decomposition, forced} and more concretely on the right. Here we have the two realizations of $(2,2,1,1)$. These are the only two realizations when the vertices are labeled and $w$ and $x$ are required to have degree $2$ while $y$ and $z$ are required to have degree $1$. Note that $\Delta((2,2,1,1))=(1,0)$. Here $w$ and $x$ are adjacent in each realization, and $y$ and $z$ are nonadjacent in each realization. We say that $w$ and $x$ are ``forcibly'' adjacent, and $y$ and $z$ are ``forcibly'' nonadjacent.
	
	In~\cite{Barrus18Forced, Cloteaux19} the author and Cloteaux independently studied forcible adjacency relationships such as these. Such a phenomenon is not restricted to split graphs like $P_4$ or to graphs with a nontrivial canonical decomposition; for example, in each of the nine labeled realizations of $d=(4,4,3,3,3,1)$, the vertices of degree 4 are adjacent, though the Erd\H{o}s--Gallai difference list $\Delta(d)=(1,1,2,2)$ contains no $0$ term.  The paper~\cite{Barrus18Forced} shows that the occurrence of a forcible adjacency relationship requires that $0 \leq \Delta_k(d) \leq 1$ for some $k$; when some minor technicalities are satisfied, the converse is true as well.
		
	Building off these results, and returning to the theme of special graph families like those mentioned earlier, the paper~\cite{Barrus18WT} \emph{defines} the \emph{weakly threshold graphs} as those graphs having degree sequences $d$ for which $\Delta_k(d) \leq 1$ for all $k$. In addition to their degree sequence characterization, the weakly threshold graphs, like the threshold graphs, split graphs, matroidal/matrogenic graphs, and/or hereditary unigraphs, have characterizations in terms of forbidden induced subgraphs, iterative constructions, and more. These enlarge upon several corresponding results for threshold graphs in pleasing ways.

	\bigskip
	In light of these several contexts in which $\Delta(d)$ appears, we ask whether more useful information about a degree sequence or its realizations may be obtained through study of $\Delta(d)$. In particular, given the results related to split graphs, threshold graphs, and weakly threshold graphs, the final term $\Delta_{m(d)}(d)$ and the maximum term of $\Delta(d)$, which we denote by $\Delta^*(d)$, may be of interest. In this paper, we present results along these lines.
	
	We begin in Section~\ref{sec: M(d)} by presenting matrices that aid in computing and visually representing Erd\H{o}s--Gallai differences.
	
	In Section~\ref{sec: complements} we tackle a problem suggested by the classes of split graphs, threshold graphs, and weakly threshold graphs. Each of these families is closed under complementation, and we prove here that degree sequences of a graph and of its complement always agree on their maximum Erd\H{o}s--Gallai differences and on their final differences.
	
	Finally, in Section~\ref{sec: posets} we show that certain Erd\H{o}s--Gallai differences behave monotonically as one moves upward through one of two posets. We first show that both the last term of $\Delta(d)$ (equivalently, the splittance) and the maximum term are nondecreasing while moving ``upward'' through degree sequences in an induced-subgraph-related poset introduced by Rao~\cite{Rao80}. We then 	improve upon a result from~\cite{Barrus18WT}, in which it was observed that the $k$th Erd\H{o}s--Gallai difference decreases or stays the same as one moves upward through the majorization/dominance poset; here we exactly quantify the changes in $\Delta_k$ during such moves. We see that with this poset as well, both the last and the maximum terms of $\Delta(d)$ exhibit monotonic behavior as one moves up the poset.

\section{The matrix $M(d)$} \label{sec: M(d)}

We preface our results on complements and posets in later sections by presenting two matrices helpful in studying $\Delta(d)$. The first is a version of the \emph{corrected Ferrers diagram}, which has appeared multiple times in degree sequence literature (our presentation follows~\cite{ArikatiPeled94}; see also~\cite{Berge73}, for example). Given a degree sequence $d=(d_1,\dots,d_n)$ with terms in nonincreasing order, define $F(d)$ to be the $n$-by-$n$ matrix having stars on the main diagonal in which for each $i \in \{1,\dots,n\}$ the first $d_i$ non-diagonal entries in the $i$th row are 1, with all other entries set equal to 0. 

When performing matrix computations on a corrected Ferrers diagram $F(d)$ in what follows, we note that the transpose $F(d)^T$ preserves the stars on the main diagonal. We adopt the convention that the numerical value of a star is 0, though for convenience we may continue to write stars instead of 0's on the main diagonal, even after the result of matrix arithmetic.

We now relate the corrected Ferrers diagram to Erd\H{o}s--Gallai differences. As observed in~\cite{Barrus18WT}, for each value $k \in \{1,\dots,m(d)\}$, Erd\H{o}s--Gallai difference $\Delta_k(d)$ is the difference of (i) the number of nonzero terms below the main diagonal of $F$ within the first $k$ columns of $F(d)$ and (ii) the number of nonzero terms to the right of the main diagonal within the first $k$ rows of $F(d)$. We adapt this result with the definition of a second helpful matrix.

\begin{defn}
	Given a degree sequence $d$, define its \emph{(Erd\H{o}s--Gallai) difference matrix} as the matrix $M(d) = F(d)^T - F(d)$, where $\star$ entries (still having a value of 0) continue to occupy the main diagonal.
\end{defn}

\begin{obs} \label{obs: first rows of M(d)}
	For any degree sequence $d$ and value $k \in \{0,\dots,m(d)\}$, the $k$th Erd\H{o}s--Gallai difference $\Delta_k(d)$ is equal to the sum of the entries in the first $k$ rows of $M(d)$.
\end{obs}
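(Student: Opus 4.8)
The plan is to evaluate the sum of the first $k$ rows of $M(d)=F(d)^T-F(d)$ directly from the definition of $F=F(d)$, and to check that it coincides with $k(k-1)+\sum_{i>k}\min\{k,d_i\}-\sum_{i\le k}d_i$, the defining expression for $\Delta_k(d)$. Writing $M_{ij}=F_{ji}-F_{ij}$ (the diagonal stars having value $0$ and so contributing nothing), the sum of the entries in the first $k$ rows of $M(d)$ equals $\sum_{i\le k}\sum_{j}(F_{ji}-F_{ij})=A-B$, where $A$ is the sum of all entries in the first $k$ \emph{columns} of $F$ and $B$ is the sum of all entries in the first $k$ \emph{rows} of $F$. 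The case $k=0$ is trivial (both sides are $0$), so I would assume $1\le k\le m(d)$.

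The term $B$ is immediate: row $i$ of $F$ contains exactly $d_i$ ones, so $B=\sum_{i\le k}d_i$. To compute $A$, I would split the rows of $F$ according to whether the row index exceeds $k$. For a row index $i>k$, all of the columns $1,\dots,k$ lie off the diagonal of row $i$ and are precisely the first $k$ non-diagonal columns of that row; hence row $i$ contributes $\min\{k,d_i\}$ ones to the first $k$ columns, giving a total of $\sum_{i>k}\min\{k,d_i\}$ from these rows. The remaining contribution to $A$ comes from the leading $k\times k$ block of $F$.

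The crux is evaluating that leading block, and this is exactly where the hypothesis $k\le m(d)$ enters. Since $d$ is nonincreasing and $d_k\ge k-1$, every row index $i\le k$ satisfies $d_i\ge d_k\ge k-1$. In such a row the $k-1$ off-diagonal positions lying in columns $1,\dots,k$ are the first $k-1$ non-diagonal columns of the row, and all of them are filled with ones because $d_i\ge k-1$. Thus the leading $k\times k$ block of $F$ has a one in every off-diagonal position, contributing exactly $k(k-1)$ to $A$. Combining the two parts yields $A=k(k-1)+\sum_{i>k}\min\{k,d_i\}$, so $A-B=\Delta_k(d)$, as desired.

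I expect the block-completeness step above to be the only genuine obstacle: it is the place where $k\le m(d)$ is indispensable, since for larger $k$ the leading block need not be complete and the clean identity for $A$ would break down. An alternative route would invoke the observation recalled above from~\cite{Barrus18WT}, that $\Delta_k(d)$ equals the number of ones below the main diagonal within the first $k$ columns minus the number of ones to the right of the main diagonal within the first $k$ rows. Splitting $A$ and $B$ each into their below-diagonal and to-the-right-of-diagonal parts identifies these two quantities as the below-diagonal part of $A$ and the to-the-right part of $B$; one would then only need to verify that the two leftover parts (the to-the-right ones in the first $k$ columns and the below-diagonal ones in the first $k$ rows) both count off-diagonal entries of the complete leading $k\times k$ block and hence cancel, each being $\binom{k}{2}$.
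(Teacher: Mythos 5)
Your proof is correct, and it supplies exactly the direct verification that the paper leaves implicit: the paper states this as an unproved Observation, justified only by adapting the counting formula quoted from~\cite{Barrus18WT}, and your computation of $A-B$ (including the key point that the leading $k\times k$ block of $F(d)$ is complete precisely because $k\le m(d)$) is the standard argument behind both that formula and the Observation. Your closing remark about the two leftover $\binom{k}{2}$ counts cancelling is precisely how the paper's cited version and your version reconcile, so there is no substantive difference in approach.
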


We note that the matrix $M(d)$ has the form \[\begin{bmatrix}0 & B\\-B^T & 0\end{bmatrix},\] where the 0 block in the top left is a square block of size $m(d)$; more generally, 0 entries in $M(d)$ occur where both the matrices $F(d)$ and $F(d)^T$ agree. Nonzero entries in the blocks $B$ and $B^T$ are $\pm 1$ and occur in locations where one of $F(d), F(d)^T$ is 0 and the other is 1; the definition of $M(d)$ as $F(d)^T-F(d)$ ensures that $M(d)$ is skew-symmetric.

\begin{exa}
	Suppose that $d = (6,5,3,3,3,1,1,1,1)$. The matrix $F=F(d)$ is shown here:
	\[F = \begin{bmatrix}
	\star & 1 & 1 & 1 & 1 & 1 & 1 & 0 & 0\\
	1 & \star & 1 & 1 & 1 & 1 & 0 & 0 & 0\\
	1 & 1 & \star & 1 & 0 & 0 & 0 & 0 & 0\\
	1 & 1 & 1 & \star & 0 & 0 & 0 & 0 & 0\\
	1 & 1 & 1 & 0 & \star & 0 & 0 & 0 & 0\\
	1 & 0 & 0 & 0 & 0 & \star & 0 & 0 & 0\\
	1 & 0 & 0 & 0 & 0 & 0 & \star & 0 & 0\\
	1 & 0 & 0 & 0 & 0 & 0 & 0 & \star & 0\\
	1 & 0 & 0 & 0 & 0 & 0 & 0 & 0 & \star
	\end{bmatrix}.\]
	Below, we show the difference matrix $M(d) = F^T-F$.
	\[M(d) = \begin{bmatrix}
	\star & 0 & 0 & 0 & 0 &   0 & 0 & 1 & 1\\
	0 & \star & 0 & 0 & 0 & -1 & 0 & 0 & 0\\
	0 & 0 & \star & 0 & 1 & 0 & 0 & 0 & 0\\
	0 & 0 &  0 & \star & 0 & 0 & 0 & 0 & 0\\
	0 & 0 & -1 & 0 & \star & 0 & 0 & 0 & 0\\
	0 & 1 &  0 & 0 & 0 & \star & 0 & 0 & 0\\
	0 & 0 &  0 & 0 & 0 & 0 & \star & 0 & 0\\
	-1 & 0 &  0 & 0 & 0 & 0 & 0 & \star & 0\\
	-1 & 0 &  0 & 0 & 0 & 0 & 0 & 0 & \star
	\end{bmatrix}.\]
	
	Note that $m(d)=4$. Summing the entries in the first $i$ rows for $1 \leq i \leq 4$ shows that $\Delta(d) = (2,1,2,2)$. 
	\QEDA
\end{exa}

In light of Observation~\ref{obs: first rows of M(d)}, the $k$th Erd\H{o}s--Gallai difference is a signed count of asymmetries in a portion of $F(d)$. We will see in Theorem~\ref{thm: sigma(d,i)} that summing the entries in the first $i$ rows for larger $i$ than $m(d)$ also gives useful information. 

We conclude this section with an observation illustrated by the matrix $M(d)$ in the example above.

\begin{lem} \label{lem: mth term equals m-1}
	For any degree sequence $d$ with $m=m(d)$, if $d_m=m-1$, then $\Delta_{m}(d) = \Delta_{m-1}(d)$.
\end{lem}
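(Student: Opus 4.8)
The plan is to compute the difference $\Delta_m(d) - \Delta_{m-1}(d)$ and show it vanishes under the hypothesis $d_m = m-1$. By Observation~\ref{obs: first rows of M(d)}, $\Delta_m(d)$ and $\Delta_{m-1}(d)$ are the sums of the entries in the first $m$ and the first $m-1$ rows of $M(d)$, respectively, so their difference is precisely the sum of the entries in the $m$th row of $M(d)$. Thus it suffices to prove that the $m$th row of $M(d) = F(d)^T - F(d)$ sums to zero.

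Since the diagonal entries are zero, the $m$th row sum of $M(d)$ equals the number of $1$'s in the $m$th column of $F(d)$ minus the number of $1$'s in the $m$th row of $F(d)$. The row count is immediate: the $m$th row of $F(d)$ contains exactly $d_m = m-1$ ones. The work lies in the column count. First I would record where the $1$'s in column $m$ can appear. A nondiagonal entry $F(d)_{j,m}$ equals $1$ precisely when column $m$ falls among the first $d_j$ nondiagonal positions of row $j$; translating the position count (the skipped diagonal shifts the threshold), this happens iff $d_j \geq m$ when $j > m$ and iff $d_j \geq m-1$ when $j < m$.

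Next I would bound these two contributions using the two facts available about $m=m(d)$. For indices $j > m$, the definition of the modified Durfee number gives $d_j < j-1$, hence $d_{m+1} \leq m-1$ and, by monotonicity, $d_j \leq m-1 < m$ for every $j > m$; so no entry below the diagonal in column $m$ is a $1$. For indices $j < m$, monotonicity together with the hypothesis gives $d_j \geq d_m = m-1$, so every one of the $m-1$ entries above the diagonal in column $m$ is a $1$. Hence column $m$ contains exactly $m-1$ ones, matching the row count, and the $m$th row of $M(d)$ sums to zero.

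The only delicate step is the position bookkeeping in the corrected Ferrers diagram, where the skipped diagonal moves the index threshold from $m$ to $m-1$ according to whether $j$ lies above or below $m$; once this is handled correctly, the defining inequality for $m(d)$ and the monotonicity of $d$ close the argument. Equivalently, one may substitute $d_m = m-1$ directly into the closed forms for $\Delta_m(d)$ and $\Delta_{m-1}(d)$ and cancel, using that $\min\{m,d_i\} = \min\{m-1,d_i\} = d_i$ for all $i > m$; this route avoids the matrix entirely but relies on the same two facts about $m(d)$.
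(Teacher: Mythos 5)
Your proof is correct and follows essentially the same route as the paper: both reduce via Observation~\ref{obs: first rows of M(d)} to showing that the $m$th row of $M(d)$ contributes nothing, the paper by asserting that this row is identically zero and you by verifying in detail that it sums to zero (your counts in fact show the entries of row $m$ and column $m$ of $F(d)$ agree position by position, so the row really is all zeroes). You have simply supplied the bookkeeping that the paper leaves implicit.
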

\begin{proof}
	Under these conditions, the $m$th row of $M(d)$ contains only zeroes, so Observation~\ref{obs: first rows of M(d)} yields the result.
\end{proof}

\section{Complementation} \label{sec: complements}

In this section we show that some of the values appearing in $\Delta(d)$ appear in $\Delta(\overline{d})$, where $\overline{d}$ is the degree sequence of the complement of a realization of $d$. In particular, the maximum and final principal Erd\H{o}s--Gallai differences agree, i.e., $\Delta^*(\overline{d}) = \Delta^*(d)$ and $\Delta_{m(d)}(d) = \Delta_{m(\overline{d})}(\overline{d})$.

These result generalize properties observed in those graph classes mentioned in Section~1 whose membership can be recognized through Erd\H{o}s--Gallai differences. Each of the classes of threshold graphs, weakly threshold graphs, and split graphs is closed under complementation. Proving this is typically done by considering structural characterizations of those graphs or by noting that the set of minimal forbidden induced subgraphs for each class is likewise closed under complementation; see~\cite{Barrus18WT, ChvatalHammer77, FoldesHammer77}. As a consequence of our work here, we obtain a proof based entirely on vertex degrees.

First, we develop a useful matrix operation that has appeared in various places but seems not to have developed a standard notation. Our presentation is adapted from~\cite{GolyshevStienstra07}. Given a matrix $M$, let $M^T$ denote its usual transpose about the main diagonal. We use the notation $M_\bot$ to indicate the transpose of $M$ ``about its antidiagonal''; if the entries of $M$ are denoted by $M_{i,j}$, where $1 \leq i \leq m$ and $1 \leq j \leq n$, then the $(i',j')$-entry of $M_\bot$ is defined by \[\left(M_\bot\right)_{i',j'} = M_{(m+1-j'),(n+1-i')}\] for $1 \leq i' \leq n$ and $1 \leq j' \leq m$. So, for example, we have \[\begin{bmatrix}
1 & 2 & 3\\
4 & 5 & 6
\end{bmatrix}_\bot = \begin{bmatrix}
6 & 3\\
5 & 2\\
4 & 1
\end{bmatrix}.
\]
It is straightforward to verify that the actions of transposing about the main diagonal and the antidiagonal commute with each other, and their composition corresponds to a $180^\circ$ ``rotation'' of the matrix. As with the standard transpose, we also find that $\bot$ distributes over sums of matrices and that $(M_\bot)_\bot = M$ for each matrix $M$. As suggested by~\cite{GolyshevStienstra07}, if $M$ is an $m$-by-$n$ matrix and $H_t$ denotes the $t$-by-$t$ matrix with $H_{i,j} = 1$ if $i+j=t+1$ and $H_{i,j}=0$ otherwise (assuming that $1 \leq i,j \leq t$), then \[M_\bot = H_nM^TH_m.\] This allows us to conclude, for instance, that for any matrices $A,B$ that are $m$-by-$p$ and $p$-by-$n$, respectively, \[(AB)_\bot = H_n(AB)^T H_m = H_n B^T A^T H_m = H_n B^T H_p H_p A^T H_m = B_\bot A_\bot.\]

Turning now to Ferrers diagrams, note that, like $F(d)^T$, the matrix $F(d)_\bot$ preserves the appearance of stars on the main diagonal. 

For any degree sequence $d$, observe that the complementary sequence $\overline{d}$ is equal to $(n-1-d_n,\dots,n-1-d_1)$. The matrix $M(\overline{d})$ also has a nice relationship with $M(d)$.

\begin{lem} \label{lem: M(complement)}
	If $d$ is a degree sequence and $\overline{d}$ is its complementary sequence, then $M(\overline{d}) = M(d)_\bot$.
\end{lem}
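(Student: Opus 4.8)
The plan is to establish the identity $M(\overline{d}) = M(d)_\bot$ by tracking how complementation and the antidiagonal transpose each act on the corrected Ferrers diagram, and then combining these via the distributivity of $\bot$ over matrix subtraction. Since $M(d) = F(d)^T - F(d)$ by definition, and since $\bot$ distributes over differences, I have $M(d)_\bot = (F(d)^T)_\bot - F(d)_\bot$. Using the rules established in the excerpt---namely $(AB)_\bot = B_\bot A_\bot$ together with the fact that transposing about the main diagonal and about the antidiagonal commute---one checks that $(F(d)^T)_\bot = (F(d)_\bot)^T$, so that $M(d)_\bot = (F(d)_\bot)^T - F(d)_\bot$. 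Thus it suffices to prove the single clean claim that $F(\overline{d}) = F(d)_\bot$, for then $M(\overline{d}) = F(\overline{d})^T - F(\overline{d}) = (F(d)_\bot)^T - F(d)_\bot = M(d)_\bot$, as desired.

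**The heart of the argument** is therefore the claim $F(\overline{d}) = F(d)_\bot$, which I would prove by a direct entrywise comparison. Recall $\overline{d} = (n-1-d_n, \dots, n-1-d_1)$, so the $i$th largest term of $\overline{d}$ is $n-1-d_{n+1-i}$. First I would handle the diagonal: $F(d)_\bot$ preserves stars on the main diagonal (noted in the excerpt for $F(d)_\bot$, paralleling the property of $F(d)^T$), matching the stars of $F(\overline{d})$. For off-diagonal entries, the antidiagonal transpose sends the $(i,j)$ entry of $F(\overline{d})$ to the $(n+1-j, n+1-i)$ entry of $F(d)$; a $1$ in $F(\overline{d})$ at a non-diagonal position $(i,j)$ encodes an edge in a realization of $\overline{d}$, while the complement relation should turn this into a non-edge---that is, a $0$---of $F(d)$ at the rotated position, and vice versa. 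I would verify this by comparing the threshold conditions: the entry $F(\overline{d})_{i,j}$ with $i \ne j$ equals $1$ exactly when $j$ falls among the first $\overline{d}_i$ off-diagonal positions of row $i$, and I would show this condition is equivalent, after the antidiagonal index substitution, to $F(d)_{n+1-j,\,n+1-i}$ equalling $0$.

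**I expect the main obstacle** to be bookkeeping the off-diagonal counting correctly under the index reversal. The corrected Ferrers diagram has the subtle feature that the $d_i$ ones in row $i$ occupy the first $d_i$ \emph{non-diagonal} positions, so the presence of the star shifts the column index by one once the column passes the diagonal. Tracking how this ``skip the diagonal'' convention interacts with the reversal $i \mapsto n+1-i$, $j \mapsto n+1-i$ requires care, and it is precisely here that the relation $\overline{d}_i = n-1-d_{n+1-i}$ (rather than $n - d_{n+1-i}$) must be used, the $-1$ accounting for the omitted diagonal position. I would organize this by splitting into the cases $j < i$ (below diagonal in $F(\overline{d})$, mapping above diagonal in $F(d)$) and $j > i$, confirming in each that the count of available off-diagonal slots matches up so that an edge becomes a non-edge under the rotation. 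Once this entrywise equivalence is pinned down, the reduction in the first paragraph completes the proof immediately.
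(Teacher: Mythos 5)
Your reduction has the right overall shape---the paper likewise proves the lemma by expressing $F(\overline{d})$ in terms of $F(d)$ and then letting $\bot$ distribute over the difference $F^T-F$---but the key identity you reduce to, $F(\overline{d}) = F(d)_\bot$, is false. Passing to the complementary sequence does \emph{two} things to the corrected Ferrers diagram: it rotates it by $180^\circ$ \emph{and} it complements the non-diagonal entries ($0 \leftrightarrow 1$). In the paper's own example with $d=(6,5,3,3,3,1,1,1,1)$ one has $(F(d)_\bot)_{1,2} = F(d)_{8,9} = 0$ while $F(\overline{d})_{1,2} = 1$, so the two matrices are simply not equal. Your second paragraph in fact contradicts your first: there you (correctly) say that a $1$ of $F(\overline{d})$ should correspond to a $0$ of $F(d)$ at the reversed position, which is the statement that the matrices are complementary off the diagonal, not equal. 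A second, smaller slip is the index map itself: the correct correspondence is the full rotation $(i,j)\mapsto(n+1-i,\,n+1-j)$, i.e.\ the composite $(\cdot)^T_\bot$, not the antidiagonal transpose alone. For instance $F(\overline{d})_{5,7} = 0 = 1 - F(d)_{5,3}$, whereas $1 - F(d)_{3,5} = 1$, so the entrywise equivalence you propose to verify would fail even in its complemented form.

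The correct intermediate identity is $F(\overline{d}) = (J_n - I_n - F(d))^T_\bot$, where $J_n$ and $I_n$ are the all-ones and identity matrices, and the lemma then follows by exactly the mechanism you set up: since $J_n - I_n$ is symmetric and invariant under $\bot$, the complementation term cancels when you form $F(\overline{d})^T - F(\overline{d})$, leaving $(F(d)^T - F(d))_\bot = M(d)_\bot$. So your framework is salvageable, but the single clean claim you isolate as ``the heart of the argument'' is not the true one, and no amount of careful bookkeeping of the star-skipping convention will make the entrywise verification go through as stated.
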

\begin{proof}
	The matrix $F(\overline{d})$ is obtained by rotating $F(d)$ by $180^\circ$ and switching non-diagonal entries from 0 to 1, and vice versa. Using matrix operations, we write \[F(\overline{d}) = (J_n - I_n - F(d))^T_\bot,\] where $I_n$ and $J_n$ denote the $n$-by-$n$ identity and all-ones matrices, respectively. We then compute:
	\begin{align*}
		M(\overline{d}) &= F(\overline{d})^T - F(\overline{d})\\
		&= [(J_n - I_n - F(d))^T_\bot]^T - (J_n - I_n - F(d))^T_\bot\\
		&= \left(F(d)^T - F(d)\right)_\bot\\
		&= M(d)_\bot.
	\end{align*}
\end{proof}

\begin{exa}
	Suppose again that $d = (6,5,3,3,3,1,1,1,1)$. We compute $\overline{d} = (7,7,7,7,5,5,5,3,2)$. The matrices $F(\overline{d})$ and $M(\overline{d})$ are as follows:
	\[F(\overline{d}) = \begin{bmatrix}
	\star & 1 & 1 & 1 & 1 & 1 & 1 & 1 & 0\\
	1 & \star & 1 & 1 & 1 & 1 & 1 & 1 & 0\\
	1 & 1 & \star & 1 & 1 & 1 & 1 & 1 & 0\\
	1 & 1 & 1 & \star & 1 & 1 & 1 & 1 & 0\\
	1 & 1 & 1 & 1 & \star & 1 & 0 & 0 & 0\\
	1 & 1 & 1 & 1 & 1 & \star & 0 & 0 & 0\\
	1 & 1 & 1 & 1 & 1 & 0 & \star & 0 & 0\\
	1 & 1 & 1 & 0 & 0 & 0 & 0 & \star & 0\\
	1 & 1 & 0 & 0 & 0 & 0 & 0 & 0 & \star
	\end{bmatrix} = \left(J_9 - I_9 - F(d)\right)^T_\bot\]
	(notice how the 0's ``trace out'' the shape $F(d)$'s 1's, but in ``rotated'' manner);
	\[M(\overline{d}) = \begin{bmatrix}
	\star &  0 & 0 & 0 &  0 & 0 & 0 &  0 & 1\\
	0 &  \star & 0 & 0 &  0 & 0 & 0 &  0 & 1\\
	0 &  0 & \star & 0 &  0 & 0 & 0 &  0 & 0\\
	0 &  0 & 0 & \star &  0 & 0 & 0 & -1 & 0\\
	0 &  0 & 0 & 0 &  \star & 0 & 1 &  0 & 0\\
	0 &  0 & 0 & 0 &  0 & \star & 0 &  0 & 0\\
	0 &  0 & 0 & 0 & -1 & 0 & \star &  0 & 0\\
	0 &  0 & 0 & 1 &  0 & 0 & 0 &  \star & 0\\
	-1 & -1 & 0 & 0 &  0 & 0 & 0 &  0 & \star
	\end{bmatrix} = M(d)_\bot.\]
	Here $m(\overline{d})=6$, and summing the entries in the first rows of $M(\overline{d})$ shows that $\Delta(\overline{d}) = (1,2,2,1,2,2)$.
	\QEDA
\end{exa}

As we will see, these connections between matrices related to $d$ and to $\overline{d}$ imply that the Erd\H{o}s--Gallai differences agree on certain values. We begin with a brief lemma that is not original (see \cite[Proposition 16]{Li75} for a similar result) but is included here for completeness.

\begin{lem} \label{lem: complementary m}
	Each degree sequence $(d_1,\dots,d_n)$ and its complementary sequence $(\overline{d}_1,\dots,\overline{d}_n) = (n-1-d_n,\dots,n-1-d_1)$ satisfy \[n \leq m(d) + m(\overline{d}) \leq n+1.\] Equality holds in the latter inequality if and only if $d_m = m-1$.
\end{lem}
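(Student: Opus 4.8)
The plan is to translate both modified Durfee numbers into statements about a single strictly monotone function, and then read off the relationship between them directly. Define $f(i) = d_i - (i-1)$ for $1 \le i \le n$. Since the terms of $d$ are nonincreasing while $i-1$ strictly increases, $f$ is strictly decreasing and integer-valued. Consequently $\{i : f(i) \ge 0\}$ is an initial interval $\{1,\dots,m\}$, which is exactly $m = m(d)$, and $\{i : f(i) \le 0\}$ is a final interval $\{i^*,\dots,n\}$, where $i^* = \min\{i : d_i \le i-1\}$. Both intervals are nonempty, since $f(1) = d_1 \ge 0$ and $f(n) = d_n - (n-1) \le 0$ (the latter because every entry of a degree sequence on $n$ vertices is at most $n-1$).

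First I would re-express $m(\overline{d})$ through this same function $f$. Using the formula $\overline{d}_j = n-1-d_{n+1-j}$, the defining inequality $\overline{d}_j \ge j-1$ rearranges to $d_{n+1-j} \le (n+1-j)-1$; substituting $i = n+1-j$ shows that $\overline{d}_j \ge j-1$ holds precisely when $f(i) \le 0$. Because the index reversal $i = n+1-j$ is order-reversing, taking the largest admissible $j$ corresponds to taking the smallest admissible $i$, and therefore $m(\overline{d}) = n+1 - i^*$.

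The remaining step, and the only one requiring care, is comparing $m$ with $i^*$. Since $f$ is strictly decreasing and integer-valued, it attains the value $0$ at most once, so the initial interval $\{f \ge 0\}$ and the final interval $\{f \le 0\}$ either share their single common endpoint or are disjoint and adjacent. In the first situation $f(m) = 0$, i.e.\ $d_m = m-1$, and then $i^* = m$; in the second situation $f(m) > 0$, which forces $f(m+1) < 0$ and hence $i^* = m+1$. Substituting into $m(\overline{d}) = n+1-i^*$ gives $m + m(\overline{d}) = n+1$ in the first case and $m + m(\overline{d}) = n$ in the second. Both displayed inequalities $n \le m(d)+m(\overline{d}) \le n+1$ and the stated equality characterization then follow at once.

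I expect the index bookkeeping for $m(\overline{d})$ to be the main obstacle: one must keep the reversal $i = n+1-j$ straight and verify that ``largest $j$'' genuinely corresponds to ``smallest $i$'' so that the maximum defining $m(\overline{d})$ turns into the minimum defining $i^*$. By contrast, the final two-case dichotomy is essentially immediate once the strict monotonicity of $f$ is established.
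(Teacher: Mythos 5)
Your proof is correct and follows essentially the same route as the paper's: both rest on the index reversal $j = n+1-i$ together with the strict monotonicity of $d_i - (i-1)$, the paper bounding $m(\overline{d})$ by evaluating $\overline{d}_{n-m}$ and $\overline{d}_{n-m+2}$ while you package the same computation as the exact identity $m(\overline{d}) = n+1-\min\{i : d_i \le i-1\}$ and then locate the possible zero of $f$. The only nitpick is the phrase ``$f(m)>0$ forces $f(m+1)<0$'' --- the strict negativity of $f(m+1)$ comes from the maximality defining $m$, not from $f(m)>0$ --- but the fact you need is true and the argument stands.
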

\begin{proof}
	Abusing notation, let $m=m(d)$ and $\overline{m} = m(\overline{d})$. Observe that \[\overline{d}_{n-m} = n-1 - d_{n+1-(n-m)} = n-1 - d_{m+1} > n-1-m;\] hence $\overline{m} \geq n-m$.
	
	On the other hand, \begin{multline*}\overline{d}_{n-m+2} \leq \overline{d}_{n-m+1} = n-1 - d_{n+1-(n-m+1)} \\ = n-1 - d_{m} \leq n-1-(m-1) < (n-m+2) - 1;\end{multline*} thus $\overline{m} \leq n-m+1$.
	
	Now if $m+\overline{m} = n$, then $\overline{d}_{n-m+1} < n-m$, so $n-1-d_{m} < n-m$ and thus $d_{m} > m-1$. If $m+\overline{m} = n+1$, then $\overline{d}_{n+1-m} \geq n-m$ and $n-1-d_m \geq n-m$ and so $d_{m} \leq m-1$; since $d_m \geq m-1$, this implies that $d_m = m-1$.
\end{proof}

\begin{thm} \label{thm: sigma(d,i)}
	Let $d$ be a degree sequence. For $i \in \{0,\dots,n\}$, let $\sigma(d,i)$ denote the sum of all entries in the first $i$ rows of $M(d)$. Then \[\sigma(d,i) = \begin{cases}
	\Delta_i(d) & \text{if } 0 \leq i \leq m(d);\\
	\Delta_{n-i}(\overline{d}) & \text{if } n-m(\overline{d}) \leq i \leq n.
	\end{cases}\]
\end{thm}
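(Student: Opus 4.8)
The plan is to dispatch the two cases separately, leaning on the two results already in hand. The first case, for $0 \le i \le m(d)$, requires no new work: it is precisely the content of Observation~\ref{obs: first rows of M(d)}, which identifies $\Delta_i(d)$ with the sum of the entries in the first $i$ rows of $M(d)$, and that sum is by definition $\sigma(d,i)$. So I would simply cite the observation and move on to the substantive case.

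For the range $n - m(\overline{d}) \le i \le n$, the goal is to show $\sigma(d,i) = \Delta_{n-i}(\overline{d})$. I would set $k = n - i$, so that $0 \le k \le m(\overline{d})$, and aim to prove $\Delta_k(\overline{d}) = \sigma(d, n-k)$. Applying Observation~\ref{obs: first rows of M(d)} to the sequence $\overline{d}$ gives $\Delta_k(\overline{d}) = \sigma(\overline{d}, k)$, the sum of the first $k$ rows of $M(\overline{d})$. By Lemma~\ref{lem: M(complement)} we have $M(\overline{d}) = M(d)_\bot$, so everything reduces to understanding the first $k$ rows of $M(d)_\bot$ in terms of $M(d)$. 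Writing out the defining formula $(M(d)_\bot)_{i',j'} = M(d)_{n+1-j',\,n+1-i'}$, one checks that the $i'$th row of $M(d)_\bot$ consists, up to a harmless reordering under summation, of exactly the entries of the $(n+1-i')$th column of $M(d)$. Hence summing over $i' = 1, \dots, k$ shows that the sum of the first $k$ rows of $M(d)_\bot$ equals the sum of the \emph{last} $k$ columns of $M(d)$.

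It then remains to convert a sum over the last $k$ columns of $M(d)$ into $\sigma(d,n-k)$, and here I would invoke the skew-symmetry of $M(d)$ twice. First, because $M(d)$ is skew-symmetric, all its entries sum to $0$, so the sum over the last $k$ columns equals the negative of the sum over the first $n-k$ columns. Second, skew-symmetry turns a sum over the first $j$ columns into the negative of the sum over the first $j$ rows, since $M(d)_{b,a} = -M(d)_{a,b}$; that is, the first $n-k$ columns sum to $-\sigma(d,n-k)$. Combining the two sign reversals yields that the sum over the last $k$ columns is exactly $\sigma(d,n-k)$, and substituting back $i = n-k$ completes the second case.

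I expect the only delicate point to be the index bookkeeping in the antidiagonal transpose --- correctly tracking that the first $k$ rows of $M(d)_\bot$ correspond to the \emph{last} $k$ columns of $M(d)$ --- together with keeping the two sign flips straight in the skew-symmetry step. As a sanity check I would note that Lemma~\ref{lem: complementary m} guarantees $n \le m(d) + m(\overline{d})$, so the two index ranges $\{0,\dots,m(d)\}$ and $\{n-m(\overline{d}),\dots,n\}$ together cover all of $\{0,\dots,n\}$ with no gap; and wherever they overlap, both formulas equal the common quantity $\sigma(d,i)$, so the piecewise definition is consistent. This overlap is precisely what will yield the complementation identities $\Delta^*(\overline{d}) = \Delta^*(d)$ and $\Delta_{m(d)}(d) = \Delta_{m(\overline{d})}(\overline{d})$ promised at the start of the section.
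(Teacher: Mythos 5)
Your proof is correct and takes essentially the same route as the paper: the first case is Observation~\ref{obs: first rows of M(d)}, and the second case combines $M(\overline{d}) = M(d)_\bot$ with two applications of skew-symmetry (total sum zero, and rows versus columns) to turn the sum of the last $k$ columns of $M(d)$ into $\sigma(d,n-k)$. The only difference is cosmetic: you carry out the index bookkeeping entrywise, whereas the paper packages the same steps as the vector computation $h_k M(d)_\bot \mathbf{1} = h_{n-k} M(d) \mathbf{1}$.
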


\begin{proof}
	The first case on the right-hand side follows from Observation~\ref{obs: first rows of M(d)}.
	
	For $0 \leq k \leq n$, let $h_k$ denote the $1$-by-$n$ vector in which the first $k$ entries are 1 and all other entries are 0. Let $\mathbf{1}$ denote the $n$-by-$1$ all-ones vector. For convenience let $I = I_n$ and $J = J_n$. Now observe that for $1 \leq k \leq m(\overline{d})$, since $\Delta_k(\overline{d})$ is a scalar and $M(d)$ is a skew symmetric matrix,
	\begin{align*}
	\Delta_k(\overline{d}) &= h_kM(\overline{d})\mathbf{1}\\
	&= h_kM(d)_\bot\mathbf{1}\\
	&= (\mathbf{1}^T - h_{n-k})^T_\bot M(d)_\bot \mathbf{1}^T_\bot\\
	&= [(\mathbf{1}^T - h_{n-k}) M(d)^T \mathbf{1}]^T_\bot\\
	&= (\mathbf{1}^T - h_{n-k}) M(d)^T \mathbf{1}\\
	&= \mathbf{1}^T M(d)^T \mathbf{1} - h_{n-k} M(d)^T \mathbf{1}\\
	&= 0 - h_{n-k} M(d)^T \mathbf{1}\\
	&= h_{n-k} M(d) \mathbf{1}.
	\end{align*}
	Hence the $k$th Erd\H{o}s--Gallai difference of $\overline{d}$ is the sum of the entries in all but the last $k$ rows of the matrix $M(d)$; this yields the second case on the right-hand side, and the last case follows since $M(d)$ is skew-symmetric.
\end{proof}

Theorem~\ref{thm: sigma(d,i)} shows that the sequence of terms $\sigma(d,k)$ lists the Erd\H{o}s--Gallai differences of $d$, in order, transitioning in later terms to list the Erd\H{o}s--Gallai differences of $\overline{d}$ in reverse order.

\begin{exa}
	To conclude the example from Section~\ref{sec: M(d)}, the terms $\sigma(d,k)$ for $d=(6,5,3,3,3,1,1,1,1)$ are, beginning with $\sigma(d,0)$, \[0,2,1,2,2,1,2,2,1,0;\] note that the initial terms of the sequence are $\Delta_0(d)$, followed by the principal Erd\H{o}s--Gallai differences for $d$ (i.e., $2,1,2,2$), while the differences for $\overline{d}$, which were $1,2,2,1,2,2$, show up in reverse order before the final term $\Delta_0(\overline{d})$. 
	\QEDA
\end{exa}

That the final principal Erd\H{o}s--Gallai differences for $d$ and $\overline{d}$ overlap in the example above is no coincidence, and it illustrates our first result on common differences for $d$ and $\overline{d}$.
\begin{cor} \label{cor: mth difference}
	If $d$ is a degree sequence and we let $m=m(d)$ and $\overline{m} = m(\overline{d})$, then 
	$\Delta_{\overline{m}}(\overline{d}) = \Delta_{m}(d)$.
\end{cor}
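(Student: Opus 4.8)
The plan is to read both $\Delta_m(d)$ and $\Delta_{\overline{m}}(\overline{d})$ off the single sequence $\sigma(d,\cdot)$ supplied by Theorem~\ref{thm: sigma(d,i)}, and then to reconcile the two indices at which they appear. By the first case of that theorem, $\sigma(d,m) = \Delta_m(d)$, since $m = m(d)$. By the second case, taking $i = n - \overline{m}$ (which lies in the admissible range $n - m(\overline{d}) \leq i \leq n$, being its left endpoint), we obtain $\sigma(d, n - \overline{m}) = \Delta_{n - (n - \overline{m})}(\overline{d}) = \Delta_{\overline{m}}(\overline{d})$. Thus it suffices to compare $\sigma(d,m)$ with $\sigma(d, n - \overline{m})$, and the entire question reduces to how the indices $m$ and $n - \overline{m}$ are related.

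First I would invoke Lemma~\ref{lem: complementary m}, which pins down $m + \overline{m}$ to be either $n$ or $n+1$. In the first case, $m + \overline{m} = n$ gives $m = n - \overline{m}$ outright, so $\sigma(d,m)$ and $\sigma(d, n - \overline{m})$ are literally the same quantity, and $\Delta_m(d) = \Delta_{\overline{m}}(\overline{d})$ follows immediately.

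The second case, $m + \overline{m} = n + 1$, is where the only real obstacle lies, and is the one requiring a little extra care: here $n - \overline{m} = m - 1$, so the two indices differ by one and the two evaluations of $\sigma$ no longer coincide automatically. However, Lemma~\ref{lem: complementary m} also records that this case occurs precisely when $d_m = m - 1$, which is exactly the hypothesis of Lemma~\ref{lem: mth term equals m-1}; that lemma then yields $\Delta_m(d) = \Delta_{m-1}(d)$. Combining these facts, I would write $\Delta_m(d) = \Delta_{m-1}(d) = \sigma(d, m-1) = \sigma(d, n - \overline{m}) = \Delta_{\overline{m}}(\overline{d})$, where the middle equalities use the first case of Theorem~\ref{thm: sigma(d,i)} (valid since $m - 1 \leq m = m(d)$) together with the identity $n - \overline{m} = m - 1$. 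This closes the second case and completes the proof. The key insight worth highlighting is that Lemma~\ref{lem: mth term equals m-1} is tailored exactly to the boundary case produced by Lemma~\ref{lem: complementary m}, so the two lemmas dovetail to absorb the single off-by-one discrepancy that the matrix symmetry of Theorem~\ref{thm: sigma(d,i)} does not eliminate on its own.
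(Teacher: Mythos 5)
Your proposal is correct and follows essentially the same route as the paper: both split into the cases $m+\overline{m}=n$ and $m+\overline{m}=n+1$ via Lemma~\ref{lem: complementary m}, read the two differences off $\sigma(d,\cdot)$ using Theorem~\ref{thm: sigma(d,i)}, and absorb the off-by-one case with the equality condition $d_m=m-1$ and Lemma~\ref{lem: mth term equals m-1}. No further comment is needed.
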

\begin{proof}
	If $m + \overline{m} = n$, then $n-\overline{m} = m$, and Theorem~\ref{thm: sigma(d,i)} yields $\Delta_{\overline{m}}(\overline{d}) = \sigma(d,n-\overline{m}) = \sigma(d,m) = \Delta_{m}(d)$.
	
	If $m + \overline{m} = n+1$, then $d_m=m-1$, so Lemma~\ref{lem: mth term equals m-1} and Theorem~\ref{thm: sigma(d,i)} yield $\Delta_{\overline{m}}(\overline{d}) = \sigma(d,m-1) = \Delta_{m-1}(d) = \Delta_{m}(d)$.	
\end{proof}

As we now show, there are often additional values that appear among the principal Erd\H{o}s--Gallai differences of both $d$ and $\overline{d}$. These arise from ``islands'' of nonzero entries in $M(d)$. In the following, use \emph{line} to refer to any row or column of a matrix, and recall that star entries are treated as zero entries.

\begin{lem} \label{lem: islands}
	The following statements hold for nonzero entries in $M(d)$.
	\begin{enumerate}
		\item[\textup{(i)}] If two entries in a single line of $M(d)$ are both nonzero, then they and all entries between them in that line are equal.
		\item[\textup{(ii)}] $M(d)$ contains no submatrix of the form $\begin{bmatrix}a & b\\0 & c\end{bmatrix}$, where $a$ and $c$ are nonzero.
	\end{enumerate}
\end{lem}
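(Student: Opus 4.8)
The plan is to work throughout with an explicit formula for the entries of $M(d)$. Writing $F=F(d)$ and recalling that in row $i$ the $1$'s occupy the first $d_i$ non-diagonal positions, a one-line count shows that $F_{i,j}=1$ exactly when $d_i\ge j$ (if $j<i$) or $d_i\ge j-1$ (if $j>i$). Since $M_{i,j}=F_{j,i}-F_{i,j}$, this gives
\[ M_{i,j}=[d_j\ge i]-[d_i\ge j-1]\ \ (i<j), \qquad M_{i,j}=[d_j\ge i-1]-[d_i\ge j]\ \ (i>j), \]
where $[\,\cdot\,]$ is an Iverson bracket and skew-symmetry is now visible. Everything that follows reduces to elementary manipulation of these brackets together with the monotonicity of $d$.

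For part (i) I would first fix a row $i$ and restrict to the columns lying on one side of the diagonal. On that side $M_{i,\cdot}$ is a difference of two brackets, and as the column index increases each bracket is the indicator of an \emph{initial} segment of columns (one because $d$ is nonincreasing, the other because its threshold is increasing). Since the two supports are nested initial segments, their difference is nonzero precisely on the set difference, a single contiguous block on which exactly one bracket equals $1$; hence the nonzero entries of row $i$ on that side are contiguous and mutually equal. It then remains to forbid a row from carrying nonzeros on \emph{both} sides of the diagonal, and here the key observation is that an above-diagonal nonzero in row $i$ forces $d_i\ge i$ while a below-diagonal nonzero forces $d_i\le i-2$, conditions that cannot coexist (indeed, when $d_i=i-1$ the whole row vanishes). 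The statement for columns follows at once, since by skew-symmetry column $j$ is the negation of row $j$ and negation preserves both contiguity and equality.

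For part (ii) I would argue by contradiction, assuming $M_{p,s}\neq 0$, $M_{q,s}=0$, and $M_{q,t}\neq 0$ with $p<q$ and $s<t$. The vanishing entry $M_{q,s}=0$ forces its two defining brackets to be equal, and I split into the ``both $1$'' and ``both $0$'' cases. In the ``both $1$'' case the lower bound on $d_s$ makes the leading bracket of $M_{p,s}$ equal to $1$, so nonzeroness of $M_{p,s}$ requires $d_p$ to be small; but $p<q$ gives $d_p\ge d_q$, and the ``both $1$'' hypothesis forces $d_q$ to be large, a contradiction. In the ``both $0$'' case $d_q$ is small, which kills the trailing bracket of $M_{q,t}$, so nonzeroness of $M_{q,t}$ forces $d_t$ to be large; but $s<t$ gives $d_s\ge d_t$ while the ``both $0$'' hypothesis forces $d_s$ to be small, again a contradiction.

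The main obstacle is bookkeeping rather than insight: because each bracket's threshold shifts by one according to whether $(p,s)$, $(q,s)$, and $(q,t)$ lie above or below the diagonal, the two cases above must be rerun with the correct offsets in each admissible diagonal configuration (the monotonicity of $d$ closes every one of them in the same way). The one genuinely different situation is the degenerate ``corner'' case $s=q$, where $M_{q,s}$ is a diagonal star and no bracket identity is available; there I would instead reuse the dichotomy from part (i), noting that $M_{q,t}\neq 0$ with $t>q$ needs $d_q\ge q$ whereas $M_{p,q}\neq 0$ with $p<q$ needs $d_q\le q-2$, which is impossible.
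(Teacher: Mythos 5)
Your proposal is correct and rests on exactly the same fact as the paper's (very terse) proof: nonzero entries of $M(d)$ mark positions where $F(d)$ and $F(d)^T$ disagree, and in every line of each of these matrices the $1$'s form an initial block among the non-diagonal entries, which is precisely what your Iverson-bracket formulas and the monotonicity of $d$ encode. Your version simply carries out in full the case analysis that the paper leaves implicit.
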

\begin{proof}
	In the difference matrix $M(d)=F(d)^T-F(d)$, a nonzero entry occurs in a position where one of $F(d)$ or $F(d)^T$ has a zero entry and the other does not. Both statements (i) and (ii) follow from the fact that in any line of $F(d)$ or $F(d)^T$, each 1 entry precedes any 0 entries. 
\end{proof}

\begin{lem} \label{lem: complement diff equality}
	For $k \in \{1,\dots,m(d)\}$, suppose that no nonzero entry in the $(k+1)$th row of $M(d)$ lies in the same column as a nonzero entry in the $k$th row. If the leftmost nonzero entry among the first $k$ rows appears in the $j$th column of $M(d)$, then $\Delta_{n+1-j}(\overline{d}) = \Delta_{k}(d)$. 
\end{lem}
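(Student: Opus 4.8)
The plan is to translate the claimed identity into a statement about partial row sums of $M(d)$ and then reduce it to the vanishing of a single rectangular block. Writing $\sigma(d,i)$ for the sum of the first $i$ rows of $M(d)$, I would first invoke Theorem~\ref{thm: sigma(d,i)} to get $\Delta_{n+1-j}(\overline{d}) = \sigma(d,j-1)$, provided $j-1$ lies in the admissible range $[\,n-m(\overline{d}),\,n\,]$. Since $k \le m(d)$ and the leading $m(d)\times m(d)$ block of $M(d)$ is zero, every nonzero entry in the first $k$ rows sits in a column exceeding $m(d)$; hence $j \ge m(d)+1$, so $j-1 \ge m(d) \ge n-m(\overline{d})$ (the last inequality from Lemma~\ref{lem: complementary m}) and $j-1 \le n$, legitimizing the reduction. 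On the other side, Observation~\ref{obs: first rows of M(d)} gives $\Delta_k(d) = \sigma(d,k)$. Thus it suffices to prove $\sigma(d,j-1) = \sigma(d,k)$.

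Next I would evaluate $\sigma(d,j-1) - \sigma(d,k)$ by splitting the columns at $j$. The leading $(j-1)\times(j-1)$ principal submatrix of $M(d)$ is skew-symmetric, hence sums to $0$, so $\sigma(d,j-1)$ equals the sum over rows $1,\dots,j-1$ and columns $j,\dots,n$. By the choice of $j$, rows $1,\dots,k$ have no nonzero entry in columns $1,\dots,j-1$, so $\sigma(d,k)$ equals the sum over rows $1,\dots,k$ and columns $j,\dots,n$. Subtracting shows that $\sigma(d,j-1)-\sigma(d,k)$ is exactly the sum of the block $M(d)[\{k+1,\dots,j-1\},\{j,\dots,n\}]$, so everything comes down to showing that this block is entirely zero.

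This is the crux, and it is where the hypothesis and Lemma~\ref{lem: islands} enter. I would first handle column $j$ itself: some row $i_0 \le k$ has a nonzero entry there, so if some row $a$ with $k+1 \le a \le j-1$ also had a nonzero entry in column $j$, then Lemma~\ref{lem: islands}(i) (contiguity along a line) would force every entry of column $j$ between rows $i_0$ and $a$ to be nonzero, in particular both rows $k$ and $k+1$, contradicting the hypothesis. So column $j$ is zero in rows $k+1,\dots,j-1$. For a column $l > j$, if $M(d)_{a,l} \ne 0$ for such an $a$, then the nonzero corner $M(d)_{i_0,j}$ (with $i_0 < a$ and $j < l$) together with $M(d)_{a,l}$ would force $M(d)_{a,j} \ne 0$ by Lemma~\ref{lem: islands}(ii) (the forbidden $2\times 2$ pattern), contradicting the column-$j$ conclusion. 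Hence the whole block vanishes, giving $\sigma(d,j-1)=\sigma(d,k)=\Delta_k(d)$ and therefore $\Delta_{n+1-j}(\overline{d})=\Delta_k(d)$.

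The main obstacle I anticipate is exactly this block-vanishing: seeing that the two halves of the island lemma must be combined — contiguity to kill column $j$ via the adjacent-row hypothesis, and the forbidden pattern to propagate the vanishing rightward — and noting that the block vanishes genuinely (no cancellation of surviving $\pm 1$ entries is needed). A secondary point to check is the range condition for Theorem~\ref{thm: sigma(d,i)}, together with the degenerate case in which the first $k$ rows contain no nonzero entry, where $j$ is undefined but $\sigma(d,k)=0$ renders the identity trivial.
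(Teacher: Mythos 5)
Your proof is correct and follows essentially the same route as the paper's: both arguments combine the contiguity statement and the forbidden $2\times 2$ pattern of Lemma~\ref{lem: islands} with the adjacent-row hypothesis to confine the nonzero entries of columns $j,\dots,n$ to the first $k$ rows, and then identify the resulting sum with $\Delta_{n+1-j}(\overline{d})$. The only cosmetic difference is that you reach $\overline{d}$ via Theorem~\ref{thm: sigma(d,i)} together with the vanishing of skew-symmetric principal blocks, whereas the paper reads the column sums directly as row sums of $M(d)_\bot = M(\overline{d})$.
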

\begin{proof}
	Note that under the given hypotheses we have $m(d) < j \leq n$, which together with Lemma~\ref{lem: complementary m} yields $1 \leq n+1-j \leq n-m(d) \leq m(\overline{d})$.
	
	Now by hypothesis and Lemma~\ref{lem: islands}, the nonzero entries in the first $k$ rows of $M(d)$ lie in columns that contain only 0 entries following the $k$th entry. Hence the sum of the first $k$ rows of $M(d)$, which equals $\Delta_k(d)$, is also equal to the sum of the columns containing these nonzero entries, which also equals the sum of the entries in each of the columns $j,\dots,n$ in $M(d)$, by Lemma~\ref{lem: islands}(ii). This is precisely the sum of the entries in the first $n+1-j$ rows of $M(d)_\bot$, and since $M(d)_\bot = M(\overline{d})$, we conclude that $\Delta_k(d) = \Delta_{n+1-j}(\overline{d})$.
\end{proof}

\begin{thm} \label{thm: max EG diff}
	Let $d$ be a degree sequence, and let $\overline{d}$ be its complementary sequence. If one of the following is true, then $\Delta_k(d)$ is one of the principal Erd\H{o}s--Gallai differences of $\overline{d}$:
	\begin{enumerate}
		\item[\textup{(i)}] $k=m(d)$;
		\item[\textup{(ii)}] $k=1$ and $\Delta_1(d) > \Delta_2(d)$;
		\item[\textup{(iii)}] $k \in \{2,\dots,m(d)-1\}$ and it is \emph{not} true that $\Delta_{k-1}(d) < \Delta_{k}(d) <\Delta_{k+1}(d)$ or that $\Delta_{k-1}(d) > \Delta_{k}(d) > \Delta_{k+1}(d)$.
	\end{enumerate}
	
	In particular, the maximum principal Erd\H{o}s--Gallai differences coincide, i.e., $\Delta^*(\overline{d}) = \Delta^*(d)$.
\end{thm}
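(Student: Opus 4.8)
The plan is to read every claim directly off the difference matrix $M(d)$, leaning on Observation~\ref{obs: first rows of M(d)} (so that $\Delta_k(d)$ is the sum of the first $k$ rows of $M(d)$) and, above all, on Lemma~\ref{lem: complement diff equality}, which turns a statement about row sums of $M(d)$ into a principal difference of $\overline d$ the moment the $k$th and $(k+1)$th rows of $M(d)$ have no nonzero entry in a common column. Case~(i) requires nothing new, since $\Delta_{m(d)}(d)=\Delta_{m(\overline d)}(\overline d)$ is exactly Corollary~\ref{cor: mth difference}. Thus the real work is to verify, in cases~(ii) and~(iii), the column-disjointness hypothesis of Lemma~\ref{lem: complement diff equality}, and then to read off the conclusion.

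The key structural input is that each of the first $m(d)$ rows of $M(d)$ is single-signed. Writing $r_k$ for the sum of row $k$ (so $r_k=\Delta_k(d)-\Delta_{k-1}(d)$ by Observation~\ref{obs: first rows of M(d)}), I would first record the entries explicitly: for $k\le m(d)<j$ the entry $M(d)_{k,j}$ is the indicator of $d_j\ge k$ minus the indicator of $j\le d_k+1$. Hence a nonzero entry of row $k$ equals $+1$ exactly when $d_j\ge k$ (which forces $j>d_k+1$) and equals $-1$ exactly when $d_j<k$ (which forces $j\le d_k+1$); since $d$ is nonincreasing these cannot coexist in a single row, so every nonzero entry of row $k$ carries the sign of $r_k$ (this also follows from Lemma~\ref{lem: islands}(i)). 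From this I would deduce the disjointness I need: if rows $k$ and $k+1$ carry entries of opposite sign, a common nonzero column $j$ would force either $d_j=k$ together with $d_k<d_{k+1}$, or simultaneously $d_j\ge k+1$ and $d_j<k$, both impossible. Therefore $r_kr_{k+1}\le 0$ already guarantees that rows $k$ and $k+1$ are column-disjoint.

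It remains to note that conditions~(ii) and~(iii) are precisely sign conditions on consecutive $r$'s. The two forbidden chains in~(iii), namely $\Delta_{k-1}<\Delta_k<\Delta_{k+1}$ and its reverse, say exactly that $r_k$ and $r_{k+1}$ are both positive or both negative; ruling them out gives $r_kr_{k+1}\le 0$. In~(ii), $r_1=\Delta_1(d)\ge 0$ while $\Delta_1(d)>\Delta_2(d)$ gives $r_2<0$, so again $r_1r_2\le 0$. In either case rows $k$ and $k+1$ are column-disjoint, and Lemma~\ref{lem: complement diff equality} identifies $\Delta_k(d)$ with some $\Delta_{n+1-j}(\overline d)$ lying in the principal range. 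The step I expect to be the genuine obstacle is the hypothesis concealed in Lemma~\ref{lem: complement diff equality} that the first $k$ rows actually contain a nonzero entry; this fails exactly when $\Delta_1(d)=\dots=\Delta_k(d)=0$, i.e.\ when the first $k$ rows all vanish (a situation corresponding to a threshold part being split off in the canonical decomposition). For~(ii) this cannot happen, since $\Delta_1(d)>\Delta_2(d)\ge 0$ forces $\Delta_1(d)>0$; for~(iii) this degenerate case is the one point that must be handled with care.

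For the headline equality $\Delta^*(\overline d)=\Delta^*(d)$ I would argue by symmetry, sidestepping the degenerate case altogether. If $\Delta^*(d)=0$ then $d$, and hence $\overline d$, is threshold by Theorem~\ref{thm: threshold}, and both maxima equal $0$. Otherwise let $k^*$ be the largest index attaining $\Delta^*(d)>0$; then $\Delta_{k^*}(d)\ge\Delta_{k^*-1}(d)$ and, if $k^*<m(d)$, $\Delta_{k^*}(d)>\Delta_{k^*+1}(d)$, so $k^*$ meets one of~(i),~(ii),~(iii), and the first $k^*$ rows contain a nonzero entry because $\Delta_{k^*}(d)=\Delta^*(d)>0$. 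The cases above then place $\Delta^*(d)=\Delta_{k^*}(d)$ among the principal differences of $\overline d$, giving $\Delta^*(d)\le\Delta^*(\overline d)$. Running the same argument for $\overline d$, whose complement is $d$, yields the reverse inequality and hence equality.
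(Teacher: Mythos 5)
Your argument follows essentially the same route as the paper: case (i) is Corollary~\ref{cor: mth difference}, and in cases (ii) and (iii) one verifies the column-disjointness hypothesis of Lemma~\ref{lem: complement diff equality} by observing that each of the first $m(d)$ rows of $M(d)$ is single-signed, so that two consecutive rows whose sums $r_k=\Delta_k(d)-\Delta_{k-1}(d)$ and $r_{k+1}$ satisfy $r_kr_{k+1}\le 0$ cannot share a nonzero column (the paper gets the same disjointness from Lemma~\ref{lem: islands}; your direct check of the entries is equivalent). Your proof of the headline equality $\Delta^*(\overline d)=\Delta^*(d)$ --- dispose of the threshold case, then take the \emph{largest} maximizing index $k^*$, which necessarily satisfies one of (i)--(iii) and whose first $k^*$ rows contain a nonzero entry because their sum is $\Delta^*(d)>0$ --- is complete and correct, and in fact handles the choice of index more carefully than the paper does.

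The one point you flag but do not close --- the degenerate situation $\Delta_1(d)=\cdots=\Delta_k(d)=0$ in case (iii), where Lemma~\ref{lem: complement diff equality} has no nonzero entry to locate --- is a genuine gap, and your suspicion is well founded: the paper's own proof dismisses it by asserting that taking $k$ minimal ``guarantees that the $k$th row of $M(d)$ contains a nonzero entry,'' which fails when $\Delta_{k-1}(d)=\Delta_k(d)=0$ and rows $1,\dots,k$ all vanish. Indeed the statement of case (iii) is false there: for $d=(6,6,4,4,4,4,4)$ (realized by $K_2\vee C_5$) one has $m(d)=5$ and $\Delta(d)=(0,0,2,4,4)$, so $k=2$ satisfies (iii); but $\overline d=(2,2,2,2,2,0,0)$ has $\Delta(\overline d)=(2,4,4)$, which does not contain $0$. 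So the case you left open cannot be closed as stated; it would require an added hypothesis such as $\Delta_k(d)>0$, or a separate appeal to the canonical decomposition when a threshold prefix splits off. None of this affects the final assertion $\Delta^*(\overline d)=\Delta^*(d)$, which your argument establishes in full.
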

\begin{proof}
	The case $k=m(d)$ was shown in Corollary~\ref{cor: mth difference}.
	
	In each of the other cases, if $\Delta_k(d) = v$ for some $v$, it suffices to assume that $k$ is lowest index making $\Delta_k(d)=v$ and satisfying the conditions of the theorem. This guarantees that the $k$th row of the difference matrix $M(d)$ contains a nonzero entry. 
	
	Suppose first that $\Delta_k(d) = \Delta_{k+1}(d)$. Then the $(k+1)$th row of $M(d)$ must contain only 0 entries, so the hypotheses of Lemma~\ref{lem: complement diff equality} are met, and $\Delta_k(d)$ appears as a principal Erd\H{o}s--Gallai difference of $\overline{d}$.
	
	If $\Delta_k(d) > \Delta_{k+1}(d)$, then by our assumptions either $k=1$ or $\Delta_{k-1}(d) < \Delta_k(d)$, so the $k$th row of $M(d)$ contains positive entries. We also see that the $(k+1)$th row of $M(d)$ contains no positive entries. By Lemma~\ref{lem: islands}, this implies that no nonzero entry in the $(k+1)$th row of $M(d)$ lies in the same column as a nonzero entry in the $k$th row, so the hypotheses of Lemma~\ref{lem: complement diff equality} are met, and we again have the desired conclusion.
	
	If $\Delta_k(d) < \Delta_{k+1}(d)$, then $\Delta_{k-1}(d) > \Delta_k(d)$, so the $k$th row of $M(d)$ contains negative entries while the $(k+1)$th row does not. Again Lemma~\ref{lem: islands} implies that hypotheses of Lemma~\ref{lem: complement diff equality} are met, and the claim holds.
	
	Finally, observe that if $\Delta_k(d)= \Delta^*(d)$, then one of the conditions (i)--(iii) holds, so $\Delta^*(d)$ appears among the principal Erd\H{o}s--Gallai differences of $\overline{d}$, and 
	$\Delta^*(\overline{d}) \geq \Delta^*(d)$. Interchanging the roles of $d$ and $\overline{d}$ shows that equality holds.
\end{proof}

\begin{cor}[{Compare with~\cite{Barrus18WT, ChvatalHammer77, FoldesHammer77}}]
	The classes of split, threshold, and weakly threshold graphs are closed under complementation.	More generally, for any set $I$ of nonnegative integers, the following classes of graphs are closed under complementation:
	\begin{align*}
	\mathcal{A}_I &= \left\{G\;:\; G \textit{ is a simple graph whose degree sequence } d \text{ satisfies } \Delta_{m(d)}(d) \in I\right\};\\
	\mathcal{B}_I &= \left\{G\;:\; G \textit{ is a simple graph whose degree sequence } d \text{ satisfies } \Delta^*(d) \in I\right\};\\	\end{align*}
\end{cor}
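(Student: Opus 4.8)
The plan is to deduce the entire corollary from the two agreement results already established---Corollary~\ref{cor: mth difference} and Theorem~\ref{thm: max EG diff}---after reinterpreting each named graph family as an instance of $\mathcal{A}_I$ or $\mathcal{B}_I$. In effect, all of the real work has been done by the relation $M(\overline{d}) = M(d)_\bot$; what remains is bookkeeping.

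First I would dispatch the two general claims directly. Let $G$ be a graph with degree sequence $d$, and let $\overline{G}$ be its complement, whose degree sequence is the complementary sequence $\overline{d} = (n-1-d_n,\dots,n-1-d_1)$. Writing $m=m(d)$ and $\overline{m}=m(\overline{d})$, Corollary~\ref{cor: mth difference} gives $\Delta_{\overline{m}}(\overline{d}) = \Delta_m(d)$; hence $\Delta_m(d)\in I$ if and only if $\Delta_{\overline{m}}(\overline{d})\in I$, so $G\in\mathcal{A}_I$ if and only if $\overline{G}\in\mathcal{A}_I$, and $\mathcal{A}_I$ is closed under complementation. Likewise, Theorem~\ref{thm: max EG diff} gives $\Delta^*(\overline{d}) = \Delta^*(d)$, so $\Delta^*(d)\in I$ if and only if $\Delta^*(\overline{d})\in I$, and $\mathcal{B}_I$ is closed under complementation.

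Next I would identify the three named families as special cases and invoke their degree-sequence characterizations. By Corollary~\ref{cor: split iff last EG diff is 0}, a graph is split exactly when the final principal difference $\Delta_{m(d)}(d)$ equals $0$, so the split graphs are precisely $\mathcal{A}_{\{0\}}$. By Theorem~\ref{thm: threshold}, a graph is threshold exactly when every principal difference vanishes; since each $\Delta_k(d)\geq 0$ by Theorem~\ref{thm: EG criterion}, this is equivalent to $\Delta^*(d)=0$, so the threshold graphs form $\mathcal{B}_{\{0\}}$. Finally, the weakly threshold graphs are those with $\Delta_k(d)\leq 1$ for all $k$, i.e.\ $\Delta^*(d)\in\{0,1\}$, so they form $\mathcal{B}_{\{0,1\}}$. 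Applying the two general closure statements with $I=\{0\}$, $I=\{0\}$, and $I=\{0,1\}$ respectively yields closure under complementation for all three classical families.

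I do not anticipate a genuine obstacle, since the substantive content lives in Corollary~\ref{cor: mth difference} and Theorem~\ref{thm: max EG diff}. The one point requiring a moment's care is the threshold case: I must note that nonnegativity of the $\Delta_k(d)$ is precisely what lets me replace ``all terms zero'' by ``maximum term zero,'' so that the threshold graphs land inside the $\mathcal{B}$ family rather than needing a separate argument. A minor secondary point worth stating explicitly is that membership in $\mathcal{A}_I$ and $\mathcal{B}_I$ depends only on the degree sequence, so it suffices to compare the sequences $d$ and $\overline{d}$ rather than any particular pair of realizations $G$ and $\overline{G}$.
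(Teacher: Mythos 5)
Your proposal is correct and follows exactly the paper's own argument: the general closure of $\mathcal{A}_I$ and $\mathcal{B}_I$ from Corollary~\ref{cor: mth difference} and Theorem~\ref{thm: max EG diff}, then the identification of the split, threshold, and weakly threshold graphs as $\mathcal{A}_{\{0\}}$, $\mathcal{B}_{\{0\}}$, and $\mathcal{B}_{\{0,1\}}$ via Corollary~\ref{cor: split iff last EG diff is 0} and Theorem~\ref{thm: threshold}. Your explicit remarks on nonnegativity (to pass from ``all terms zero'' to ``maximum term zero'') and on membership depending only on the degree sequence are sensible details the paper leaves implicit.
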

\begin{proof}
	The general statement follows from Corollary~\ref{cor: mth difference} and Theorem~\ref{thm: max EG diff}. By Corollary~\ref{cor: split iff last EG diff is 0} and Theorem~\ref{thm: threshold}, the families of split and threshold graphs are respectively the families $\mathcal{A}_I$ and $\mathcal{B}_I$ when $I=\{0\}$. The family of weakly threshold graphs is the family $\mathcal{B}_I$ when $I=\{0,1\}$.
\end{proof}

\section{Two posets} \label{sec: posets}

Here we show how values of principal Erd\H{o}s--Gallai differences compare for degree sequences related under either of two partial orders important to the study of degree sequences. In particular, we will show that both the parameters of last and largest principal Erd\H{o}s--Gallai difference are monotone under these orders.

\subsection{The Rao order}

In~\cite{Rao80}, S.B.~Rao introduced a partial order on degree sequences in the following way: for degree sequences $e$ and $d$, define $e \preccurlyeq d$ if there exist realizations $H$ of $e$, and $G$ of $d$, such that $H$ is an induced subgraph of $G$. Rao showed that $\preccurlyeq$ does indeed yield a partial order (we call it $\mathcal{R}$) and posed some questions about this order. In the decades since the poset's introduction, the most celebrated result about it has been the proof by Chudnovsky and Seymour that $\mathcal{R}$ is a well quasi-order~\cite{ChudnovskySeymour14}. As such, if a family of degree sequences forms an ideal (i.e., a downward-closed subposet) in $\mathcal{R}$, then it can be characterized by finitely many minimal obstructions. One example of such a characterization has appeared in~\cite{Barrus14}; another will in~\cite{BarrusTrenkWhitman2?}.

How do the Erd\H{o}s--Gallai differences behave with respect to the order relation $\preccurlyeq$ ? Consider the example where $e=(2,2,2,1,1)$ and $d=(4,3,3,2,2,2)$. Here $e \preccurlyeq d$ in $\mathcal{R}$, and $\Delta(e) = (2,2,2)$ while $\Delta(d) = (1,3,2)$. Note that entrywise, a term in $\Delta(d)$ may be equal to, less than, or greater than the corresponding term in $\Delta(e)$. For other pairs $e,d$ it may also be the case that $\Delta(d)$ may have more terms than $\Delta(e)$. However, a few relationships between the terms of $\Delta(e)$ and $\Delta(d)$ always hold, as we now show.

Consider first the last terms of $\Delta(e)$ and $\Delta(d)$. Recall from Section~\ref{sec: intro} that the splittance of a degree sequence $d$ (or of a graph $G$ realizing $d$) is the minimum number of edges to be deleted from or added to $G$ in order to transform it into a split graph. This number is the same for any realization of $d$, and it is equal to one-half the value of $\Delta_{m(d)}(d)$.

\begin{thm}
	If degree sequences $e,d$ satisfy $e \preccurlyeq d$ in $\mathcal{R}$, then $\Delta_{m(e)}(e) \leq \Delta_{m(d)}(d)$.
\end{thm}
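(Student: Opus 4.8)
The plan is to reduce the statement to the monotonicity of splittance under taking induced subgraphs. Since $\Delta_{m(d)}(d) = 2s(G)$ for any realization $G$ of $d$ (Corollary~\ref{cor: split iff last EG diff is 0}), and since splittance is an invariant of the degree sequence, it suffices to prove that $s(H) \le s(G)$ whenever $H$ is an induced subgraph of $G$. By the definition of $\preccurlyeq$, we may fix realizations $H$ of $e$ and $G$ of $d$ with $H = G[X]$ for some $X \subseteq V(G)$; then $\Delta_{m(e)}(e) = 2s(H)$ and $\Delta_{m(d)}(d) = 2s(G)$, so the theorem follows once we know $s(H) \le s(G)$.

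To establish $s(H) \le s(G)$, I would take an optimal modification witnessing $s(G)$: a set $P$ of $s(G)$ vertex pairs such that toggling adjacency on exactly those pairs turns $G$ into a split graph $G'$. The key structural input is that split graphs are hereditary, so the induced subgraph $G'[X]$ is again split. I would then observe that toggling adjacency on those pairs of $P$ having \emph{both} endpoints in $X$ transforms $H = G[X]$ into $G'[X]$, while pairs with an endpoint outside $X$ are simply irrelevant to adjacencies inside $X$ and are discarded. Since the number of retained pairs is at most $|P| = s(G)$, and they convert $H$ into a split graph, we obtain $s(H) \le s(G)$.

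The only point requiring care is the heredity of split graphs, which is immediate: if $V(G') = Q \cup S$ with $Q$ a clique and $S$ independent, then $X \cap Q$ is a clique and $X \cap S$ is independent, so the partition $X = (X \cap Q) \cup (X \cap S)$ exhibits $G'[X]$ as split. I expect the main (though still light) obstacle to be the bookkeeping in the middle paragraph: one must confirm that restricting the edit set to pairs inside $X$ really does produce $G'[X]$ from $G[X]$, so that the edit count can only drop. Converting back through $\Delta_{m} = 2s$ then yields $\Delta_{m(e)}(e) = 2s(H) \le 2s(G) = \Delta_{m(d)}(d)$, completing the argument.
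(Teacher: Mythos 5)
Your proposal is correct and follows essentially the same route as the paper: both reduce the statement to $\Delta_{m}(d)=2s(G)$ via Corollary~\ref{cor: split iff last EG diff is 0} and then use the fact that an optimal edit set turning $G$ into a split graph restricts to an edit set turning the induced subgraph $H$ into a split graph. The paper compresses the middle step into one clause (``any adjacency modifications that transform $G$ into a split graph necessarily transform its induced subgraph $H$ as well''), whereas you spell out the heredity of split graphs and the bookkeeping explicitly; the substance is identical.
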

\begin{proof}
	For any degree sequence $\pi$, let $s(\pi)$ denote the splittance of any realization of $\pi$; for a graph $P$, let $s(P)$ denote the splittance of $P$. Given that $e \preccurlyeq d$ in $\mathcal{R}$, let $H$ and $G$ respectively be realizations of $e$ and $d$ such that $H$ is an induced subgraph of $G$. Observe that \[\Delta_{m(e)}(e) = 2s(e) = 2s(H) \leq 2s(G) = 2s(d) = \Delta_{m(d)}(d),\] since any adjacency modifications that transform $G$ into a split graph necessarily transform its induced subgraph $H$ as well.
\end{proof}

Now consider the maximum Erd\H{o}s--Gallai differences of $e$ and $d$.

\begin{thm}
	If degree sequences $e,d$ satisfy $e \preccurlyeq d$ in $\mathcal{R}$, then $\Delta^*(e) \leq \Delta^*(d)$.
\end{thm}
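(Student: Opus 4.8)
The plan is to reduce the statement to a single vertex deletion and then to control how the maximum principal difference can change when one vertex is removed from a graph. Since $\Delta^*$ depends only on the degree sequence, and since $e \preccurlyeq d$ supplies realizations $H$ of $e$ and $G$ of $d$ with $H$ an induced subgraph of $G$, I would delete the vertices of $V(G)\setminus V(H)$ one at a time, at each step passing to an induced subgraph of $G$. Thus it suffices to prove that $\Delta^*(G-v) \le \Delta^*(G)$ for an arbitrary graph $G$ and vertex $v$, and the theorem follows by iterating.

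First I would record a minimization formula for the individual differences. For a $k$-subset $K$ of $V(G)$ set $f_G(K) = k(k-1) + \sum_{u\notin K}\min\{k,\deg u\} - \sum_{u\in K}\deg u$, which one checks equals $2\bar{e}(K) + \sum_{u\notin K}(\min\{k,\deg u\} - |N(u)\cap K|)$, where $\bar{e}(K)$ is the number of non-edges inside $K$. A straightforward exchange argument (replacing a vertex of $K$ by a non-vertex of higher degree never increases $f_G$) shows that $\Delta_k(d) = \min_{|K|=k} f_G(K)$, the minimum being attained at any set of $k$ highest-degree vertices; note that the first expression for $f_G(K)$ depends only on the degree multiset of $K$, so all such top sets yield the same value. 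This reframes $\Delta^*(d) = \max_{1\le k\le m(d)}\min_{|K|=k} f_G(K)$, and it supplies the two elementary comparisons I will need: for $K_0\subseteq V(G')$ one has $f_G(K_0\cup\{v\}) \ge f_{G'}(K_0)$ (index $k'+1$ in $G$ against index $k'$ in $G'$), and for $K\subseteq V(G')$ one has $f_G(K)\ge f_{G'}(K)$ at the same index. Both follow from the facts that passing to the induced subgraph $G' = G-v$ preserves non-edges and adjacencies inside a set while only lowering outside degrees, reduced to a short case analysis on whether individual degrees lie below, at, or above the truncation value.

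Now write $G'=G-v$ and choose $k'$ attaining $\Delta^*(G')$. If $\Delta^*(G')=\Delta_{m(G')}(G')$, the conclusion is immediate from the splittance result proved just above: then $\Delta^*(G') = 2s(G') \le 2s(G) = \Delta_{m(G)}(G) \le \Delta^*(G)$. Otherwise I may take $k' < m(G')$, whence $k'+1 \le m(G') \le m(G)$, using here that deleting $v$ and lowering its neighbors' degrees cannot increase the modified Durfee number, so $m(G')\le m(G)$. I then split on whether $v$ lies among the $k'+1$ highest-degree vertices of $G$. If it does, a minimizing set for $\Delta_{k'+1}(G)$ may be taken to contain $v$; writing it as $K_0\cup\{v\}$ and applying the first comparison gives $\Delta_{k'+1}(G) = f_G(K_0\cup\{v\}) \ge f_{G'}(K_0) \ge \Delta_{k'}(G') = \Delta^*(G')$, and since $k'+1\le m(G)$ we obtain $\Delta^*(G)\ge\Delta^*(G')$. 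If $v$ is not among the $k'+1$ highest, it is not among the $k'$ highest either, so a minimizing set for $\Delta_{k'}(G)$ avoids $v$; the second comparison then gives $\Delta_{k'}(G)\ge\Delta_{k'}(G') = \Delta^*(G')$ with $k'\le m(G)$, again yielding the claim.

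The main obstacle is exactly this final coordination. Because $\Delta^*$ is a $\max$–$\min$ quantity, one cannot simply transplant a witness subset from $G'$ into $G$: a transplanted set can have an $f_G$-value far exceeding $\Delta^*(G)$ (for instance, an independent set of low-degree vertices in a star has large $f$ even though the star is threshold). The resolution is to use that the relevant minimum is realized on the highest-degree vertices, and to let the position of $v$ in the degree order decide whether to test index $k'$ or $k'+1$ in $G$, with the already-established splittance monotonicity absorbing the boundary case $k'=m(G')$ where the index $k'+1$ might exceed $m(G)$. Verifying the two $f$-comparisons and the inequality $m(G')\le m(G)$ are the routine pieces I would leave to direct computation.
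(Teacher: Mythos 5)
Your proof is correct, but it takes a genuinely different route from the paper's. Both arguments reduce to a single vertex addition/deletion, but from there the paper stays with the corrected Ferrers diagram: for an index $k$ with $\Delta_k(e)=\Delta^*(e)$ it compares the entries of $F(e)$ and $F(d)$ in the first $k$ rows and columns, showing the column count (i.e.\ the sum over $F(d)^T$) grows by exactly $\min\{k,a\}$ while the row count grows by at most $\min\{k,a\}$, so that $\Delta_k(d)\ge\Delta_k(e)$ at the \emph{same} index $k$, which remains principal for $d$ since $m(e)\le m(d)$. You instead prove the minimization formula $\Delta_k(d)=\min_{|K|=k}f_G(K)$ via an exchange argument and then compare $f$-values of $G$ and $G-v$ at index $k'$ or $k'+1$ according to whether $v$ ranks among the $k'+1$ largest degrees of $G$, deferring the boundary case $k'=m(G-v)$ to the splittance theorem proved just before. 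I checked the pieces you left to routine verification --- the identity $f_G(K)=2\bar e(K)+\sum_{u\notin K}(\min\{k,\deg u\}-|N(u)\cap K|)$, the two $f$-comparisons (the key point being $\min\{k'+1,\deg_{G'}u+1\}=\min\{k',\deg_{G'}u\}+1$ for neighbors of $v$, and $|N(v)\cap K|\le\min\{k,\deg v\}$ for the extra term), and $m(G-v)\le m(G)$ --- and they all hold, as does the dichotomy that if $v$ cannot be placed in a top-$(k'+1)$ set then it lies in no top-$k'$ set. Comparatively, the paper's computation is shorter and actually yields the stronger termwise inequality $\Delta_k(e)\le\Delta_k(d)$ for all $k\le m(e)$ (the Rao-order analogue of Theorem 4.9), whereas your approach avoids the matrices $F(d)$ and $M(d)$ altogether and isolates the reusable fact that $\Delta_k$ is the minimum of $f_G$ over all $k$-sets, attained on the high-degree vertices, at the cost of a case analysis and a dependence on the preceding splittance result.
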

\begin{proof}
	It suffices by induction to assume that $d$ has one more term than $e$ (since a realization $H$ of $d$ that contains an induced subgraph $G$ that is a realization of $e$ can be built up by restoring one vertex of $V(H) \setminus V(G)$ at a time to $G$; the degree sequences of the intermediate graphs lie between $e$ and $d$ in $\mathcal{R}$). Suppose that $d$ is the degree sequence resulting from adding a new vertex $v$ to a realization $G$ of $e$. If $a$ is the degree of $v$, then $d$ is obtained by increasing $a$ terms of $e$ each by $1$, inserting a term equal to $a$ in the list, and reordering the list terms as necessary so that $d$ is in nonincreasing order.
	
	Equivalently, $F(d)$ is obtained from $F(e)$ by
	\begin{enumerate}
		\item[(i)] adding one more row of zeroes at the bottom of the matrix and one more column of zeroes at the right side of $F(e)$, with a star in the entry on the main diagonal,
		\item[(ii)] changing the first non-diagonal 0 term of a row into a 1, in each of $a$ distinct rows, and
		\item[(iii)] changing the first non-diagonal 0 term of a column into a 1, in each of the \emph{first} $a$ columns.
	\end{enumerate}
	
	Now let $k$ be an index such that $\Delta_k(e) = \Delta^*(e)$. Let $n$ be the length of $e$. Since \[\sum_{\substack{i \leq k\\1 \leq j \leq n+1}}(F(d)^T)_{ij} = \sum_{\substack{i \leq k\\1 \leq j \leq n}} (F(e)^T)_{ij} + \min\{k,a\}\] and \[\sum_{\substack{i \leq k\\1 \leq j \leq n+1}} (F(d))_{ij} \leq \sum_{\substack{i \leq k\\1 \leq j \leq n}} (F(e))_{ij} + \min\{k,a\},\]
	
	The value $\Delta_k(d)$, which by Observation~\ref{obs: first rows of M(d)} equals the difference of the two left-hand sides in the expressions above, is necessarily at least as large as the difference of the two right-hand sides above. Hence $\Delta^*(d) \geq \Delta_k(d) \geq \Delta_k(e) = \Delta^*(e)$.
\end{proof}

\subsection{The dominance order}

In this section we consider the \emph{dominance order} $\mathcal{D}_s$. The ground set for this partially ordered set consists of all degree sequences having a fixed sum $s$ (which equals twice the number of edges in realizations of the sequences). Two degree sequences $d,e$ in this set satisfy $d \succeq e$ if $d$ \emph{majorizes} $e$, that is, if \[\sum_{i \leq k} d_i \geq \sum_{i \leq k} e_i\] for all integers $k$, assuming that the terms of $d$ and $e$ are respectively in nonincreasing order.

Classic results on $\mathcal{D}_s$ and the relation $\succeq$ include the fact that $\mathcal{D}_s$ is an ideal, or ``downward-closed'' subposet when the majorization order is applied to all partitions (not just degree sequences) of a fixed even positive number; this was proved by Ruch and Gutman~\cite{RuchGutman79}. In other words, for any partitions $d$, $e$ of even positive integer $s$, if $d \succeq e$ and $d$ is the degree sequence of a simple graph, then $e$ is as well. On the other hand, as shown in~\cite{Merris03,Barrus18WT,RuchGutman79}, if degree sequences $d$ and $e$ with the same sum satisfy $d \succeq e$ and $e$ is respectively split, weakly threshold, or threshold, then $d$ must be as well.

A key notion in studying $\mathcal{D}_s$ is the \emph{unit transformation}, which is an operation performed on degree sequences $d$ in $\mathcal{D}_s$. If the terms of $d$ are $d_i$, indexed so that $d_1 \geq d_2 \geq \cdots$, a unit transformation on $d$ subtracts 1 from some term $d_r$ and adds 1 to $d_t$ for some $t$ such that $d_r \geq d_t+2$ (if $d_r \geq 2$, we also permit ourselves to imagine that a 0 immediately following nonzero terms of $d$ can be augmented to become 1, even if $d$ was not previously assumed to have any terms equal to 0). A well known result (sometimes called \emph{Muirhead's Lemma}, as in~\cite{Merris03}) states that $d \succeq e$ if and only if $e$ can be obtained by a finite sequence of unit transformations on $d$.

We examine the effect of a single unit transformation on $\Delta(d)$, making use of the matrices $F(d)$ and $M(d)$ introduced in Section~\ref{sec: M(d)}. As above, suppose that $e$ is obtained from a unit transformation on $d$, and $r,t$ are the indices where the modifications of the terms of $d$ happen, so \[e_i = \begin{cases}
d_i-1 & \text{if } i=r;\\
d_i+1 & \text{if } i=t;\\
d_i & \text{otherwise};
\end{cases}\]
and $r<t$. We further suppose that $t$ is the first index following $r$ such that $d_t \leq d_r - 2$ while $r$ is the last index preceding $t$ such that $d_r \geq d_t+2$; this ensures that $d_r > d_{r+1}$, that  $e$'s terms remain in nonincreasing order, and that $e$ cannot be produced from $d$ through a composition of two or more unit transformations (as may happen if $d_t < d_u \leq d_r-2$ for some $u$, for instance). For convenience, assume that $d$ and $e$ both have $n$ terms, where a trailing 0 is added to $d$ if necessary.

We may write $F(e) = F(d) + C_{rt}(d)$ for some $n$-by-$n$ matrix $C_{rt}(d)$ having a single $-1$ in Row $r$ and a $1$ in Row $t$; let $j_r$ and $j_t$ respectively denote the indices of the columns of $C_{rt}(d)$ where these nonzero entries occur. Every other entry of $C_{rt}(d)$ is equal to 0. Note that the star entries in $F(d)$ cause 
\begin{equation} \label{eq: jr jt}
j_r = \begin{cases}
d_r + 1 & \text{if } d_r \geq r;\\
d_r     & \text{if } d_r < r;
\end{cases} \qquad \text{and} \qquad j_t = \begin{cases}
d_t + 2 & \text{if } d_t \geq t-1;\\
d_t + 1 & \text{if } d_t < t-1.
\end{cases}
\end{equation}
We claim that $j_t < j_r$. Indeed, since $d_r \geq d_t+2$, this is immediate unless $d_t+2 = j_t = j_r = d_r$; however, this exceptional possibility cannot happen, since it requires both that $d_t \geq t-1$ and $d_r < r$, causing \[d_r < r < t \leq 1+d_t \leq d_r-1.\] 

Observe that $M(e) = M(d) + C'_{rt}(d)$, where $C'_{rt}(d) = C_{rt}(d)^T - C_{rt}(d)$. The matrix $C'_{rt}(d)$ is skew-symmetric and has either two or four nonzero entries: there are entries of $+1$ in positions $(r,j_r)$ and $(j_t,t)$, or an entry of $+2$ if these positions coincide, and entries equal to $-1$ in positions $(t,j_t)$ and $(j_r,r)$, with an entry of $-2$ instead if these positions coincide. Observe that no location of a positive entry in $C'_{rt}(d)$ is in danger of coinciding with the location of a negative entry, since $r<t$ and $j_t < j_r$ imply that $(r,j_r)\neq (t,j_t)$ and $(j_t,t) \neq (j_r,r)$, and \eqref{eq: jr jt} shows that $j_r \neq r$ and $j_t \neq t$, so $(r,j_r) \neq (j_r,r)$ and $(t,j_t) \neq (j_t,t)$.

Using Observation~\ref{obs: first rows of M(d)} and the fact that $M(e)=M(d)+C'_{rt}(d)$, the description of $C'_{rt}(d)$ just given lets us trace the effect on the Erd\H{o}s--Gallai differences due to unit transformations. 

\begin{lem}\label{lem: Delta(d) to Delta(e)}
	If $d$ and $e$ satisfy the assumptions given previously, then \[\Delta_k(e) = \begin{cases}
		\Delta_k(d) + c_1(d,r,t,k) - c_{-1}(d,r,t,k) & \text{if } k \leq \min\{m(d),m(e)\};\\
		\Delta_{m(d)}(d) + 2 & \text{if } k = m(e) > m(d),
		 \end{cases}\] where $c_1(d,r,t,k)$ is the number of elements (counting multiplicities) from $\{r,j_t\}$ that are less than or equal to $k$, and $c_{-1}(d,r,t,k)$ is the number of elements (counting multiplicities) from $\{t,j_r\}$ that are less than or equal to $k$.	
\end{lem}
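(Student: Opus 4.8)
The plan is to build everything on Observation~\ref{obs: first rows of M(d)}, which identifies each principal Erd\H{o}s--Gallai difference with a partial row-sum of the matrix $M$, together with the additive decomposition $M(e) = M(d) + C'_{rt}(d)$ established just above the statement. For any $k \le m(e)$ the observation applied to $e$ expresses $\Delta_k(e)$ as the sum of the entries in the first $k$ rows of $M(e)$, and since $M(e)$ and $M(d)$ differ only by $C'_{rt}(d)$, that sum equals the corresponding partial row-sum of $M(d)$ plus the sum of the entries in the first $k$ rows of $C'_{rt}(d)$.

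First I would dispatch the generic case $k \le \min\{m(d),m(e)\}$. Here Observation~\ref{obs: first rows of M(d)} applies to $d$ as well, so the partial row-sum of $M(d)$ is exactly $\Delta_k(d)$, and it remains only to read off the first-$k$-row sum of $C'_{rt}(d)$ from its explicit description. The positive entries of $C'_{rt}(d)$ lie in rows $r$ and $j_t$ while the negative entries lie in rows $t$ and $j_r$; hence the first $k$ rows contribute $+1$ for each element of the multiset $\{r,j_t\}$ that is at most $k$ and $-1$ for each element of $\{t,j_r\}$ that is at most $k$. This is precisely $c_1(d,r,t,k) - c_{-1}(d,r,t,k)$. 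The only delicacy is bookkeeping when two like-signed entries coincide (producing a $\pm 2$); counting the index sets with multiplicity, as in the statement, makes the formula correct in these degenerate situations, and the fact noted before the lemma that no positive entry shares a position with a negative one rules out any hidden cancellation.

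The real work, and the step I expect to be the main obstacle, is the boundary case $k = m(e) > m(d)$. I would first pin down exactly when the modified Durfee number can rise. Writing $m = m(d)$, one has $d_{m+1} \le m-1$, and checking whether $m+1$ equals $r$, equals $t$, or neither shows that $m(e) > m$ can occur only when $t = m+1$ and $d_{m+1} = m-1$, in which case $e_{m+1} = m$ and $e_{m+2} = d_{m+2} \le m-1$ force $m(e) = m+1$ exactly. Under these conditions equation~\eqref{eq: jr jt} gives $j_t = d_t+1 = m$ and $j_r = d_r+1 \ge m+2$ (since $d_r \ge d_t+2 = m+1$), so among the four special rows only $r$, $j_t = m$, and $t = m+1$ are at most $m+1$; thus the first-$(m+1)$-row sum of $C'_{rt}(d)$ equals $1 + 1 - 1 = 1$, again robust to any coincidence $r = j_t$.

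It then remains to compare the first-$(m+1)$-row sum of $M(d)$ with $\Delta_m(d) = \Delta_{m(d)}(d)$: the extra row $m+1$ contributes its own sum, which is $|\{i : d_i \ge m\}| - d_{m+1} = p - (m-1)$ with $p$ as in Lemma~\ref{lem: Li properties}(ii). The spacing built into an elementary unit transformation forces $d_u \ge d_r - 1 \ge m$ for all $r < u \le m$ and $d_u \ge d_r \ge m+1$ for $u \le r$, so every $d_u$ with $u \le m$ is at least $m$ while $d_{m+1} = m-1$; hence $p = m$ and that extra row contributes $1$. Adding the two contributions gives $\Delta_{m(e)}(e) = \Delta_{m(d)}(d) + 1 + 1 = \Delta_{m(d)}(d) + 2$, as claimed. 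As an independent check one may instead substitute $t = m+1$, $e_{m+1} = m$, and $d_{m+1} = m-1$ directly into the defining formula for $\Delta$, where the cancellations again produce an increment of exactly $2$.
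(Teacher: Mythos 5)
Your proof is correct, and for the main case $k \leq \min\{m(d),m(e)\}$ it is exactly the paper's argument: read off the first-$k$-row sum of $C'_{rt}(d)$ from the positions of its $\pm 1$ entries and add it to $\Delta_k(d)$ via Observation~\ref{obs: first rows of M(d)}. You diverge only in the boundary case $k = m(e) > m(d)$. Both you and the paper first pin down that this forces $t = m(d)+1$, $d_{t} = m(d)-1$, $e_t = m(d)$, and $m(e) = m(d)+1$; but from there the paper applies Lemma~\ref{lem: mth term equals m-1} to $e$ (row $m(e)$ of $M(e)$ is all zeroes, so $\Delta_{m(e)}(e) = \Delta_{m(d)}(e)$) and then simply invokes the first case at $k = m(d)$ with $c_1 = 2$, $c_{-1} = 0$. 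You instead sum the first $m(d)+1$ rows directly, which obliges you to compute the sum of row $m(d)+1$ of $M(d)$ itself; that computation needs the spacing facts $d_u \geq m(d)$ for all $u \leq m(d)$, which in turn rest on the minimality convention for the unit transformation ($t$ being the first admissible index after $r$). Your intermediate identity expressing that row sum as $|\{i : d_i \geq m\}| - d_{m+1}$ is not a general formula for a row sum of $M(d)$, but it does hold under the hypotheses in force, so the argument goes through. The net comparison: the paper's reduction is shorter and reuses machinery already proved, while your version is a self-contained arithmetic check that makes visible exactly where the $+2$ comes from (one unit from the new row of $M(d)$, one from $C'_{rt}(d)$) at the cost of an extra computation.
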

\begin{proof}
	The first case was proved above; the second case can only happen when \[d_{m(d)+1} < m(d) \leq e_{m(d)+1} \leq d_{m(d)+1}+1;\] in this situation $m(e)=m(d)+1$. Here $e_{m(d)+1} = m(d)$, so by Lemma~\ref{lem: mth term equals m-1}, $\Delta_{m(e)}(e) = \Delta_{m(d)}(e)$. Since we further know that $(t,j_t) = (m(d)+1,d_{m(d)+1}+1)$ and $r \leq m(d)$ while $j_r>m(d)+1$, we see that $c_1(d,r,m(d)+1,m(d)) = 2$ and $c_{-1}(d,r,m(d)+1,m(d)) = 0$, and the first case in our formula yields the result.	
\end{proof}

Lemma~\ref{lem: Delta(d) to Delta(e)} provides exact values for differences $\Delta_k(e)$ as unit transformations occur; as consequences, we have the following previously known results.

\begin{cor}\label{cor: majorization facts} \mbox{}
	\begin{enumerate}
		\item[\textup{(1)}] (See~\cite{Barrus18WT}) If $e$ is obtained by a unit transformation on $d$ and $m' = \min\{m(d), m(e)\}$, then for any $k \in \{1,\dots,m'\}$ we have $\Delta_k(d) \leq \Delta_k(e)$.
		\item[\textup{(2)}] (Li~\cite{Li75}; see also~\cite{Barrus18WT} and Corollary~\ref{cor: split iff last EG diff is 0} herein) For any degree sequence $d$, $\Delta_{m(d)}(d)$ is an even number.		
	\end{enumerate}
\end{cor}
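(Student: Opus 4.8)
The plan is to read both parts directly off the exact formula in Lemma~\ref{lem: Delta(d) to Delta(e)}, supplementing part (2) with the splittance interpretation already recorded in Corollary~\ref{cor: split iff last EG diff is 0}.

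For part (1), I would invoke the first case of Lemma~\ref{lem: Delta(d) to Delta(e)}: for each $k \le m' = \min\{m(d),m(e)\}$ we have $\Delta_k(e) - \Delta_k(d) = c_1(d,r,t,k) - c_{-1}(d,r,t,k)$, so it suffices to show $c_1 \ge c_{-1}$, i.e. that the multiset $\{r,j_t\}$ has at least as many elements $\le k$ as the multiset $\{t,j_r\}$. The two facts I need are $r < t$ and $j_t < j_r$, both established in the setup preceding the lemma. They let me pair the $c_1$-contributor $r$ with the $c_{-1}$-contributor $t$, and the $c_1$-contributor $j_t$ with $j_r$; in each pair the $c_1$-index is strictly smaller than its $c_{-1}$-partner, so whenever a $c_{-1}$-index is $\le k$ its partner is too. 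This gives $c_{-1} \le c_1$ and hence $\Delta_k(d) \le \Delta_k(e)$. The same pairing handles the coincidence cases (such as $r=j_t$, producing a $+2$ entry of $C'_{rt}(d)$), since the argument only uses the two strict inequalities between paired indices.

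For part (2), the quickest route bypasses the lemma: Corollary~\ref{cor: split iff last EG diff is 0} gives $\Delta_{m(d)}(d) = 2s(G)$ for any realization $G$, and since the splittance $s(G)$ is an integer, $\Delta_{m(d)}(d)$ is even. Equivalently, one can argue by parity straight from the closed form. Writing $m=m(d)$, the choice of $m$ forces $d_{m+1}<m$, hence $\min\{m,d_i\}=d_i$ for all $i>m$ and $\Delta_m(d) = m(m-1) + \sum_{i>m} d_i - \sum_{i\le m} d_i$. Here $m(m-1)$ is even, and modulo $2$ the remaining difference equals $\sum_i d_i$, the degree sum, which is even because it is twice the number of edges; so $\Delta_m(d)$ is even.

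If one instead wants a proof that genuinely uses Lemma~\ref{lem: Delta(d) to Delta(e)} (matching the ``as consequences'' framing), I would induct downward in $\mathcal{D}_s$ from the unique threshold sequence $T$ at the top, where $\Delta_{m(T)}(T)=0$ by Theorem~\ref{thm: threshold}, and show that one unit transformation preserves the parity of the last principal difference. The steps where the modified Durfee number rises are painless, since Lemma~\ref{lem: Delta(d) to Delta(e)} records an exact change of $+2$; and the step where it drops by one forces $r=m(d)$ with $d_m=m-1$, so Lemma~\ref{lem: mth term equals m-1} lets me replace $\Delta_{m(d)}(d)$ by $\Delta_{m(d)-1}(d)$ before applying the first case of the lemma. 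The main obstacle in this route is the remaining case, where $m$ is unchanged: there I must show $c_1(d,r,t,m) - c_{-1}(d,r,t,m)$ is even, i.e. that an even number of the indices $r,t,j_r,j_t$ lie at or below the Durfee value $m$. That parity statement is exactly where the formulas in~\eqref{eq: jr jt} and the definition of $m$ have to be pushed through case by case, so I would present part (2) via the splittance/parity shortcut and mention the inductive version only as a remark.
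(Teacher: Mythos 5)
Your part (1) is exactly the paper's argument: from $r<t$ and $j_t<j_r$, pair $r$ with $t$ and $j_t$ with $j_r$ to get $c_{-1}(d,r,t,k)\le c_1(d,r,t,k)$, and the first case of Lemma~\ref{lem: Delta(d) to Delta(e)} does the rest. For part (2), however, you take a genuinely different and more elementary route. The paper proves evenness by induction down the dominance order: it first establishes the parity fact that for $u\in\{r,t\}$ one has $u\le m(d)$ if and only if $j_u>m(d)$ (so $c_1-c_{-1}$ at $k=m(d)$ is $2-0$ or $1-1$), then starts from a threshold sequence, where the last difference is $0$ by Theorem~\ref{thm: threshold}, and propagates even parity through unit transformations via Lemma~\ref{lem: Delta(d) to Delta(e)}. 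Your two shortcuts --- reading $\Delta_{m(d)}(d)=2s(G)$ off Corollary~\ref{cor: split iff last EG diff is 0} with $s(G)$ an integer, or computing directly that $\Delta_m(d)=m(m-1)+\sum_{i>m}d_i-\sum_{i\le m}d_i\equiv\sum_i d_i\equiv 0\pmod 2$ --- are both correct (the reduction $\min\{m,d_i\}=d_i$ for $i>m$ is justified since $d_i\le d_{m+1}<m$), and the second is entirely self-contained, needing neither Hammer--Simeone nor the unit-transformation machinery. What you lose relative to the paper is only thematic: the paper's inductive proof is deliberately framed as a \emph{consequence} of Lemma~\ref{lem: Delta(d) to Delta(e)}, and the parity observation about $r,t,j_r,j_t$ that you flag as the remaining obstacle is precisely the one-line fact the paper supplies; your remark correctly identifies it but leaves it unproved, which is fine given that you present the shortcut as the actual proof.
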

\begin{proof}
	(1): Since $r<t$ and $j_t<j_r$,  it follows that $c_1(d,r,t,k) \geq c_{-1}(d,r,t,k)$.
	
	(2): Because of the definition of $m(d)$, for $u \in \{r,t\}$ it follows that $u \leq m(d)$ if and only if $j_u > m(d)$. Thus $c_1(d,r,t,m(d)) - c_{-1}(d,r,t,m(d))$ has either the form $2-0$ or $1-1$ for any $d$. Theorem~\ref{thm: threshold} implies that any threshold graph's degree sequence $\pi$ satisfies $\Delta_{m(\pi)}(\pi)=0$. Since every degree sequence is majorized by a threshold sequence~\cite{RuchGutman79}, Lemma~\ref{lem: Delta(d) to Delta(e)} and an inductive argument show that $\Delta_{m(d)}(d)$ is even.
\end{proof}

Our next lemma shows that the last term of $\Delta(d)$ is monotone with respect to the dominance order. 

\begin{lem} \label{lem: last term is nondecreasing}
	If $d$ and $e$ are degree sequences with a common sum such that $d\succeq e$, then $\Delta_{m(d)}(d) \leq \Delta_{m(e)}(e)$. 
\end{lem}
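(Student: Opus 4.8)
The plan is to reduce to a single unit transformation and then track the last Erd\H{o}s--Gallai difference using the machinery just developed. By Muirhead's Lemma, $e$ is obtained from $d$ by a finite sequence of unit transformations, and the degree sequences appearing along the way all share the common sum and form a chain in $\mathcal{D}_s$; so by induction on the number of transformations it suffices to prove the inequality when $e$ arises from $d$ by a single unit transformation, with indices $r<t$ as set up before Lemma~\ref{lem: Delta(d) to Delta(e)}. I would then split into three cases according to whether $m(e)$ is greater than, equal to, or less than $m=m(d)$, and chain the resulting single-step inequalities to recover the general statement.

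Two of the cases are immediate. If $m(e)>m(d)$, the second clause of Lemma~\ref{lem: Delta(d) to Delta(e)} gives $\Delta_{m(e)}(e)=\Delta_{m(d)}(d)+2>\Delta_{m(d)}(d)$. If $m(e)=m(d)=m$, then $m=\min\{m(d),m(e)\}$, so Corollary~\ref{cor: majorization facts}(1) applied with $k=m$ yields $\Delta_{m(d)}(d)=\Delta_m(d)\le \Delta_m(e)=\Delta_{m(e)}(e)$ directly.

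The remaining case $m(e)<m(d)$ is where the work lies, since here $\min\{m(d),m(e)\}=m(e)$ and Corollary~\ref{cor: majorization facts}(1) no longer reaches the index $m(d)$. The key observation I would establish is that this case is highly constrained. Because $e_i=d_i$ for every $i\notin\{r,t\}$, index $m(d)$ still qualifies for $e$ (that is, $e_{m(d)}\ge m(d)-1$) unless its $d$-value was the one decreased; the increase at $t$ can only raise a value, so this forces $r=m(d)$. Moreover, since $d_{m(d)}\ge m(d)-1$ while $e_{m(d)}=d_{m(d)}-1$ must now drop below $m(d)-1$, we are forced to have $d_{m(d)}=m(d)-1$ and hence $e_{m(d)}=m(d)-2$. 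A short check on index $m(d)-1$, whose $e$-value equals its $d$-value and is at least $d_{m(d)}=m(d)-1\ge m(d)-2$, then shows that index $m(d)-1$ still qualifies, so $m(e)=m(d)-1$.

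With this structure in hand the inequality closes up cleanly: since $d_{m(d)}=m(d)-1$, Lemma~\ref{lem: mth term equals m-1} gives $\Delta_{m(d)}(d)=\Delta_{m(d)-1}(d)=\Delta_{m(e)}(d)$, and then Corollary~\ref{cor: majorization facts}(1) with $k=m(e)=\min\{m(d),m(e)\}$ gives $\Delta_{m(e)}(d)\le \Delta_{m(e)}(e)$; chaining these yields $\Delta_{m(d)}(d)\le\Delta_{m(e)}(e)$. The main obstacle, and the only place requiring genuine case analysis, is thus pinning down the behavior of the modified Durfee number when it strictly decreases under a unit transformation; once that is understood, Lemma~\ref{lem: mth term equals m-1} provides exactly the bridge needed to transfer the comparison from index $m(d)$ down to the index $m(e)$ that Corollary~\ref{cor: majorization facts}(1) can handle.
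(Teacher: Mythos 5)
Your proof is correct and follows essentially the same route as the paper's: reduce to a single unit transformation via Muirhead's Lemma, split on whether $m(e)$ exceeds, equals, or falls below $m(d)$, and use Lemma~\ref{lem: mth term equals m-1} as the bridge in the decreasing case. The only (harmless) difference is that in that last case you finish with the inequality from Corollary~\ref{cor: majorization facts}(1) where the paper asserts outright equality; your extra detail pinning down $r=m(d)$, $d_{m(d)}=m(d)-1$, and $m(e)=m(d)-1$ is precisely what the paper's terser statement relies on.
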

\begin{proof}
	By Muirhead's Lemma, it suffices to prove the result in the case that some unit transformation changes $d$ into $e$. Suppose that this is so.
	
	If $m(e)=m(d)$ or $m(e)>m(d)$, then Corollary~\ref{cor: majorization facts} or Lemma~\ref{lem: Delta(d) to Delta(e)} gives the result. If instead $m(e)<m(d)$, then the unit transformation changing $d$ to $e$ must change a 1 to a 0 in the $(m(d)-1)$th column of the $m(d)$th row in $F(d)$. This implies that $d_{m(d)} = m(d)-1$, which by Lemma~\ref{lem: mth term equals m-1} implies that $\Delta_{m(d)}(d) = \Delta_{m(e)}(e)$.
\end{proof}

This gives context to the following result of Merris that degree sequences of split graphs (those $d$ for which the last term of $\Delta(d)$ is 0) appear at the ``top'' of $\mathcal{D}_s$.

\begin{cor}[{\cite[Lemma 3.3]{Merris03}}]
	If $d$ and $e$ are degree sequences such that $d \succeq e$ and $e$ is the degree sequence of a split graph, then $d$ is the degree sequence of a split graph as well.
\end{cor}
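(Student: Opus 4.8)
The plan is to translate the statement about split graphs entirely into the language of the final Erd\H{o}s--Gallai difference and then invoke the monotonicity already established. By Corollary~\ref{cor: split iff last EG diff is 0}, a degree sequence is that of a split graph exactly when its last principal difference vanishes. Thus the hypothesis that $e$ is a split sequence is equivalent to $\Delta_{m(e)}(e) = 0$, and the goal reduces to showing $\Delta_{m(d)}(d) = 0$.

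The heart of the argument is Lemma~\ref{lem: last term is nondecreasing}. Since $d \succeq e$ and both sequences lie in the same $\mathcal{D}_s$ (so they share a common sum), that lemma yields immediately $\Delta_{m(d)}(d) \leq \Delta_{m(e)}(e) = 0$. It then remains only to bound this quantity from below. Because $d$ is itself assumed to be a degree sequence, the Erd\H{o}s--Gallai inequalities~\eqref{eq: EG ineq} hold for $d$, which is precisely the assertion that every $\Delta_k(d) \geq 0$; in particular $\Delta_{m(d)}(d) \geq 0$. Combining the two bounds forces $\Delta_{m(d)}(d) = 0$, and a second application of Corollary~\ref{cor: split iff last EG diff is 0} identifies $d$ as a split sequence.

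There is essentially no obstacle remaining once Lemma~\ref{lem: last term is nondecreasing} is in hand: all the real work has been absorbed into establishing that monotonicity, which itself rested on the per-unit-transformation bookkeeping of Lemma~\ref{lem: Delta(d) to Delta(e)}. The one point that warrants a moment's care is confirming that $d$ is genuinely a degree sequence, so that the lower bound $\Delta_{m(d)}(d) \geq 0$ is legitimately available; this is supplied by the hypotheses. One could equivalently phrase the whole argument through splittance, observing that $2s(d) = \Delta_{m(d)}(d) \leq \Delta_{m(e)}(e) = 2s(e) = 0$ together with the fact that splittance is a nonnegative integer, but the difference-based version is the cleanest given the tools already developed.
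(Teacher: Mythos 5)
Your proof is correct and follows the paper's own route exactly: the paper likewise derives this corollary immediately from Lemma~\ref{lem: last term is nondecreasing} together with Corollary~\ref{cor: split iff last EG diff is 0}. You have merely spelled out the details (the lower bound $\Delta_{m(d)}(d) \geq 0$ from the Erd\H{o}s--Gallai criterion, or equivalently the nonnegativity of splittance) that the paper leaves implicit.
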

\begin{proof}
	This follows from Lemma~\ref{lem: last term is nondecreasing} and Corollary~\ref{cor: split iff last EG diff is 0}.
\end{proof}

We now extend the monotonicity to general indices and beyond individual unit transformations.

\begin{thm} \label{thm: monotone diffs in dominance order}
	If $d$ and $e$ are degree sequences with a common sum and $d \succeq e$, then $\Delta_k(d) \leq \Delta_k(e)$ for all $k \in \{1,\dots, m'\}$, where $m' = \min\{m(d),m(e)\}$. 
\end{thm}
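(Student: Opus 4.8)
The plan is to induct on the number of unit transformations connecting $d$ to $e$, but to sidestep a pitfall I will phrase the inductive claim in terms of the row-sum quantities $\sigma(\cdot,k)$ from Theorem~\ref{thm: sigma(d,i)} rather than the differences $\Delta_k$ directly. By Muirhead's Lemma, $d \succeq e$ means that $e$ arises from $d$ by a finite sequence of unit transformations, and I may take each of these to be a minimal unit transformation of the kind assumed just before Lemma~\ref{lem: Delta(d) to Delta(e)} (any transfer decomposes into such steps). The naive strategy of applying Corollary~\ref{cor: majorization facts}(1) with the fixed index $k$ at each step fails, because the modified Durfee number can rise or fall by one at each transformation, so an intermediate sequence may have modified Durfee number smaller than $k$ even though $k \le \min\{m(d),m(e)\}$ holds at the endpoints. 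This is the crux of the difficulty, and the device of working with $\sigma(\cdot,k)$, which is defined for every $k \in \{0,\dots,n\}$ regardless of the modified Durfee number, is what resolves it.

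First I would record the single-step statement: if $e$ is obtained from $d$ by one minimal unit transformation, then $\sigma(e,k) \ge \sigma(d,k)$ for every $k \in \{0,\dots,n\}$. For this I use the identity $M(e) = M(d) + C'_{rt}(d)$ from the discussion preceding Lemma~\ref{lem: Delta(d) to Delta(e)}, together with the fact that $\sigma(\cdot,k)$ is just a sum of rows of the difference matrix. Consequently $\sigma(e,k) - \sigma(d,k)$ equals the sum of the entries in the first $k$ rows of $C'_{rt}(d)$, which is exactly $c_1(d,r,t,k) - c_{-1}(d,r,t,k)$, since the positive entries of $C'_{rt}(d)$ sit in rows $r$ and $j_t$ and the negative entries in rows $t$ and $j_r$. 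The inequalities $r < t$ and $j_t < j_r$ (both established before Lemma~\ref{lem: Delta(d) to Delta(e)}) furnish a bijection $\{t,j_r\}\to\{r,j_t\}$ of these two-element multisets that does not increase values, so $c_1(d,r,t,k) \ge c_{-1}(d,r,t,k)$ for every $k$, and hence $\sigma(e,k) \ge \sigma(d,k)$. I note that this is genuinely stronger than Corollary~\ref{cor: majorization facts}(1), which asserts the corresponding inequality only for $k$ up to the smaller modified Durfee number.

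Chaining this single-step inequality along the entire sequence of unit transformations yields $\sigma(e,k) \ge \sigma(d,k)$ for all $k \in \{0,\dots,n\}$, where now $d$ and $e$ are the original endpoints. To finish, I restrict to $k \le m' = \min\{m(d),m(e)\}$. For such $k$ we have both $k \le m(d)$ and $k \le m(e)$, so Observation~\ref{obs: first rows of M(d)} identifies $\sigma(d,k) = \Delta_k(d)$ and $\sigma(e,k) = \Delta_k(e)$. Combining this with the chained inequality gives $\Delta_k(d) = \sigma(d,k) \le \sigma(e,k) = \Delta_k(e)$, as required. I expect the only delicate point to be the reduction to minimal unit transformations, so that the clean matrix identity $M(e) = M(d) + C'_{rt}(d)$ applies verbatim at each step; everything else is a bookkeeping of row sums that the $\sigma$-formulation makes transparent.
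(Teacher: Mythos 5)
Your proof is correct, and it takes a genuinely different route from the paper's. The paper also reduces to a chain of minimal unit transformations, but it confronts head-on the difficulty you identify: when an intermediate sequence $\pi^i$ has $m(\pi^i)<k$, Corollary~\ref{cor: majorization facts}(1) no longer applies at index $k$, so the paper splits the chain at the points where the modified Durfee number dips below $k$ (noting that at such a boundary $m(\pi^a)=k$ exactly, since $m$ changes by at most one per step) and bridges each dip with Lemma~\ref{lem: last term is nondecreasing}, the monotonicity of the \emph{last} principal difference. Your device of proving the single-step inequality for the row sums $\sigma(\cdot,k)$ at \emph{every} $k\in\{0,\dots,n\}$ --- which follows from the same identity $M(e)=M(d)+C'_{rt}(d)$ together with $r<t$ and $j_t<j_r$, and does not need Observation~\ref{obs: first rows of M(d)} until the very end --- dissolves that case analysis entirely, and it is genuinely stronger than Corollary~\ref{cor: majorization facts}(1): via Theorem~\ref{thm: sigma(d,i)} it simultaneously yields $\Delta_j(\overline{d})\leq\Delta_j(\overline{e})$, consistent with the fact that complementation preserves majorization. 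The one point to make explicit is the bookkeeping you flag: the sequences in the chain may have different lengths, so all of them should be padded with trailing zeros to a common length at the outset; since a padded zero contributes only a zero row and zero column to $M(\cdot)$ (as every degree is at most $n-1$), this leaves each $\sigma(\cdot,k)$ unchanged and the chaining goes through.
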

\begin{proof}
	Let $\pi^0, \pi^1,\dots,\pi^p$ denote a sequence of degree sequences satisfying $\pi^0 = d$, $\pi^p = e$, and for each $i \in \{1,\dots,p\}$ the degree sequence $\pi^i$ is obtained by performing a unit transformation on $\pi^{i-1}$. Suppose that $k \in \{1,\dots,m'\}$. If $k \leq m(\pi^i)$ for all $i$ in $\{0,\dots,p\}$, then $\Delta_k(d) \leq \Delta_k(e)$ follows inductively by Corollary~\ref{cor: majorization facts}. It thus suffices to prove the conclusion in the case that $m(\pi^0)=m(\pi^p) = k$ and $m(\pi^i)<k$ for all $i$ such that $0<i<p$. 
	
	Under this assumption, let $m_i$ denote $m(\pi^i)$ for $i \in \{0,\dots,p\}$.  Lemma~\ref{lem: last term is nondecreasing} implies that 
	\[\Delta_k(\pi^0) = \Delta_{m_0}(\pi^0) \leq \Delta_{m_1}(\pi^1) \leq \dots \leq \Delta_{m_p}(\pi^p) = \Delta_k(\pi^p),\]
	and our claim is proved.
\end{proof}

To conclude the paper, we return to the maximum term among Erd\H{o}s--Gallai difference lists.

\begin{thm} \label{thm: monotonic max diff}
	If $d$ and $e$ are degree sequences satisfying $d \succeq e$, then $\Delta^*(d) \leq \Delta^*(e)$.
\end{thm}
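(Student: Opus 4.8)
The plan is to deduce the result from the index-wise monotonicity already established in Theorem~\ref{thm: monotone diffs in dominance order}, together with the complementation invariance $\Delta^*(\overline{d}) = \Delta^*(d)$ from Theorem~\ref{thm: max EG diff}. The difficulty is that Theorem~\ref{thm: monotone diffs in dominance order} only controls $\Delta_k$ for indices $k$ up to $m' = \min\{m(d), m(e)\}$, whereas the maximizing index for $\Delta(d)$ could exceed $m(e)$ when $m(d) > m(e)$. I would therefore split the argument into two cases according to whether $m(d) \le m(e)$.

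When $m(d) \le m(e)$, choose $k$ with $\Delta_k(d) = \Delta^*(d)$; then $k \le m(d) \le m(e)$, so $k \le m'$, and Theorem~\ref{thm: monotone diffs in dominance order} gives $\Delta^*(d) = \Delta_k(d) \le \Delta_k(e) \le \Delta^*(e)$ immediately.

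The case $m(d) > m(e)$ is the main obstacle, and I would resolve it by passing to complements. First I would note that, after padding $d$ and $e$ with trailing zeros to a common length $n$ (which changes neither $\Delta^*$ nor the majorization relation), the relation $d \succeq e$ with common sum forces $\overline{d} \succeq \overline{e}$: a short partial-sum computation shows $\sum_{i \le k} \overline{d}_i - \sum_{i \le k} \overline{e}_i = \sum_{j \le n-k} d_j - \sum_{j \le n-k} e_j \ge 0$. Next, from $m(d) > m(e)$ and the bounds $m(d) + m(\overline{d}) \le n+1$ and $m(e) + m(\overline{e}) \ge n$ of Lemma~\ref{lem: complementary m}, I would derive $m(\overline{d}) \le n+1-m(d) \le n - m(e) \le m(\overline{e})$, i.e.\ $m(\overline{d}) \le m(\overline{e})$. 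This is exactly the hypothesis of the easy case, now applied to $\overline{d} \succeq \overline{e}$: choosing $k'$ with $\Delta_{k'}(\overline{d}) = \Delta^*(\overline{d})$, we have $k' \le m(\overline{d}) \le m(\overline{e})$, so Theorem~\ref{thm: monotone diffs in dominance order} yields $\Delta^*(\overline{d}) \le \Delta^*(\overline{e})$. Finally, invoking $\Delta^*(\overline{d}) = \Delta^*(d)$ and $\Delta^*(\overline{e}) = \Delta^*(e)$ from Theorem~\ref{thm: max EG diff} converts this back to $\Delta^*(d) \le \Delta^*(e)$.

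I expect the only real checks to be the two auxiliary facts used in the second case --- that complementation preserves the majorization relation (for equal sums and equal lengths) and that $m(d) > m(e)$ implies $m(\overline{d}) \le m(\overline{e})$ --- both of which are short computations built directly on Lemma~\ref{lem: complementary m}. The conceptual heart is recognizing that complementation trades the uncontrolled ``tail'' indices $m(e) < k \le m(d)$ for controlled low indices in $\overline{d}$, so that the single monotonicity statement of Theorem~\ref{thm: monotone diffs in dominance order} suffices for both cases.
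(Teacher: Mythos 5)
Your proof is correct, but it resolves the hard case by a genuinely different mechanism than the paper does. Both arguments dispose of the case where the maximizing index $k$ for $\Delta(d)$ satisfies $k \le m(e)$ by a direct appeal to Theorem~\ref{thm: monotone diffs in dominance order}. For the remaining case $k > m(e)$, the paper stays entirely inside the dominance-order machinery of Section~\ref{sec: posets}: it walks along a chain of unit transformations from $d$ to $e$, uses the fact that the length of $\Delta(\pi^i)$ changes by at most $1$ per step to locate an intermediate sequence $\pi^\ell$ with $m(\pi^\ell)=k$, applies Corollary~\ref{cor: majorization facts} up to $\pi^\ell$ and then Lemma~\ref{lem: last term is nondecreasing} from $\pi^\ell$ to $e$. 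You instead import the complementation results of Section~\ref{sec: complements}: you check that (after padding to a common length $n$) complementation preserves majorization, deduce from Lemma~\ref{lem: complementary m} that $m(d)>m(e)$ forces $m(\overline{d}) \le m(\overline{e})$, run the easy case on $\overline{d} \succeq \overline{e}$, and transfer back via $\Delta^*(\overline{d})=\Delta^*(d)$ from Theorem~\ref{thm: max EG diff}. I verified your two auxiliary computations --- the partial-sum identity $\sum_{i \le k}\overline{d}_i - \sum_{i \le k}\overline{e}_i = \sum_{j \le n-k}d_j - \sum_{j \le n-k}e_j$ and the chain $m(\overline{d}) \le n+1-m(d) \le n-m(e) \le m(\overline{e})$ --- and both are sound; padding with trailing zeros indeed leaves $m$, $\Delta^*$, and the majorization relation unchanged. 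What each approach buys: the paper's proof is self-contained within the dominance-order section and makes no use of complements, at the cost of a slightly delicate intermediate-value argument along the transformation chain; yours is shorter and avoids tracking the chain, at the cost of invoking the heavier Theorem~\ref{thm: max EG diff} and of establishing the (standard but not stated in the paper) fact that complementation is order-preserving on $\mathcal{D}_s$. Your observation that complementation converts the uncontrolled tail indices into controlled low indices is a nice structural insight that the paper does not exploit here.
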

\begin{proof}
	This is immediate from the previous theorem if for some index $k$ we have $\Delta_k(d) = \Delta^*(d)$ and $k \leq m(e)$. If instead $\Delta(d)$ achieves the value $\Delta^*(d)$ at some index $k$ that is greater than $m(e)$, then consider a sequence $\pi^0, \pi^1,\dots,\pi^p$ of degree sequences satisfying $\pi^0 = d$, $\pi^p = e$, and for each $i \in \{1,\dots,p\}$ the degree sequence $\pi^i$ is obtained by performing a unit transformation on $\pi^{i-1}$. Since $m(d)>m(e)$ and the length of the Erd\H{o}s--Gallai difference list of a degree sequence can change by at most 1 during a unit transformation, at some term $\pi^\ell$ in the sequence, the length of $\Delta(\pi^\ell)$ is equal to $k$, and each of $\Delta(\pi^0),\dots,\Delta(\pi^\ell)$ has length at least $k$. By Corollary~\ref{cor: majorization facts} we see that $\Delta_{m(\pi^\ell)}(\pi^\ell) \geq \Delta^*(d)$. Then by Lemma~\ref{lem: last term is nondecreasing}, applied iteratively to the sequence $\pi^\ell,\dots,\pi^p = e$, we see that $\Delta^*(e) \geq \Delta^*(d)$, as claimed.	
\end{proof}

\begin{cor}[{\cite{RuchGutman79,Barrus18WT}}]
	If $d$ and $e$ are degree sequences satisfying $d \succeq e$, where $e$ is the degree sequence of a threshold (respectively, weakly threshold) graph, then $d$ is also the degree sequence of a threshold (resp., weakly threshold) graph.
\end{cor}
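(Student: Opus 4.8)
The plan is to read this corollary off directly from the monotonicity of the maximum principal Erd\H{o}s--Gallai difference established in Theorem~\ref{thm: monotonic max diff}, combined with the degree-sequence characterizations of the two graph families rephrased in terms of $\Delta^*$. First I would record both characterizations in this form. By Theorem~\ref{thm: threshold}, a degree sequence is threshold exactly when every principal difference vanishes; since each $\Delta_k$ is nonnegative for a genuine degree sequence (Theorem~\ref{thm: EG criterion}), this is equivalent to $\Delta^*=0$. By the definition of weakly threshold graphs recalled in Section~\ref{sec: intro}, a degree sequence is weakly threshold exactly when $\Delta_k \leq 1$ for all $k$, i.e.\ when $\Delta^* \leq 1$.

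With these translations in hand the argument is very short. Theorem~\ref{thm: monotonic max diff} gives $\Delta^*(d) \leq \Delta^*(e)$ whenever $d \succeq e$. In the threshold case the hypothesis says $\Delta^*(e)=0$, so $\Delta^*(d) \leq 0$; because $d$ is itself a degree sequence, its principal differences are nonnegative, forcing $\Delta^*(d)=0$ and hence making $d$ threshold. In the weakly threshold case the hypothesis gives $\Delta^*(e) \leq 1$, whence $\Delta^*(d) \leq 1$ immediately, so $d$ is weakly threshold.

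There is essentially no obstacle here: all the substantive work was carried out in proving Theorem~\ref{thm: monotonic max diff}, and the corollary is simply its specialization to the values $0$ and $1$. The only point that repays a moment's care is the threshold case, where one must invoke nonnegativity of the principal differences to upgrade the inequality $\Delta^*(d)\leq 0$ into the equality $\Delta^*(d)=0$; this is automatic precisely because $d$ is assumed to be a realizable degree sequence rather than an arbitrary sorted list. I would close by observing that this yields a proof of these classical closure facts resting entirely on vertex degrees, paralleling the complement-based results of Section~\ref{sec: complements}.
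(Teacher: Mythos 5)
Your proposal is correct and follows exactly the paper's own route: the paper's proof is the one-line observation that the corollary follows from Theorem~\ref{thm: monotonic max diff}, Theorem~\ref{thm: threshold}, and the definition of weakly threshold graphs, which is precisely what you carry out (with the additional, correct, detail that nonnegativity of the principal differences upgrades $\Delta^*(d)\leq 0$ to $\Delta^*(d)=0$ in the threshold case).
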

\begin{proof}
	This follows from Theorem~\ref{thm: monotonic max diff} and Theorem~\ref{thm: threshold} and the definition of weakly threshold graphs.
\end{proof}

\end{document}